\DeclareMathOperator{\rank}{rank}
\DeclareMathOperator{\cod}{cod}
\DeclareMathOperator{\diag}{diag}
\DeclareMathOperator{\orb}{O}
\DeclareMathOperator{\tsp}{T}
\DeclareMathOperator{\GSYL}{GSYL}
\DeclareMathOperator{\rev}{rev}
\DeclareMathOperator{\POL}{POL}
\DeclareMathOperator{\PEN}{PENCIL}
\DeclareMathOperator{\skewPEN}{SSPENCIL}
\DeclareMathOperator{\Inf}{INF}
\newcommand{\sdotsss}%
{\text{\raisebox{-2.2pt}{$\cdot\,$}%
 \raisebox{1.7pt}{$\cdot$}%
\raisebox{5.6pt}{$\,\cdot$}}}
\renewcommand{\le}{\leqslant}
\renewcommand{\ge}{\geqslant}
\newtheorem{theorem}{Theorem}[section]
\newtheorem{lemma}[theorem]{Lemma}
\newtheorem{definition}[theorem]{Definition}
\newtheorem{example}[theorem]{Example}
\newcommand{\hide}[1]{}
\begin{document}

\title{Even grade generic skew-symmetric matrix polynomials with bounded rank}


\date{}

\author[cm]{Fernando De Ter\'{a}n}
\ead{fteran@math.uc3m.es}

\author[um]{Andrii Dmytryshyn}
\ead{andrii.dmytryshyn@oru.se}

\author[cm]{Froil\'an M. Dopico}
\ead{dopico@math.uc3m.es}

\address[cm]{Departamento de Matem\'aticas, Universidad Carlos III de Madrid, Avenida de la Universidad 30, 28911, Legan\'es, Spain.}

\address[um]{School of Science and Technology, \"{O}rebro University, 701 82, \"{O}rebro, Sweden.}

\begin{abstract}
We show that the set of $m \times m$ complex skew-symmetric matrix polynomials of even grade $d$, i.e., of degree at most $d$, and (normal) rank at most $2r$ is the closure of the single set of matrix polynomials with certain, explicitly described, complete eigenstructure. This complete eigenstructure corresponds to the most generic $m \times m$ complex skew-symmetric matrix polynomials of even grade $d$ and rank at most $2r$. The analogous problem for the case of skew-symmetric matrix polynomials of odd grade is solved in \cite{DmDo18}.
\end{abstract}

\begin{keyword}
complete eigenstructure\sep genericity\sep matrix polynomials\sep skew-symmetry\sep normal rank\sep orbits\sep pencils

\MSC 15A18\sep 15A21
\end{keyword}

\maketitle

\section{Introduction}
During the recent years, the problem of determining the generic eigenstructures for sets of matrix pencils and matrix polynomials has been a subject of research interest, in particular, for the sets of matrix pencils and matrix polynomials with fixed grade and bounded rank. The description of such sets as a union of closures of certain ``generic sets'' of pencils and matrix polynomials is given in terms of their eigenstructures for general matrix pencils \cite{DeDo08} and polynomials \cite{DmDo17}, as well as for matrix pencils that are skew-symmetric \cite{DmDo18}, symmetric \cite{DeDD20a}, $T$-palindromic { and $T$-alternating \cite{DeTe18}, and} Hermitian \cite{DeDD22}. Moreover, in the case of odd grade the generic sets are also derived for skew-symmetric \cite{DmDo18} and symmetric \cite{DeDD20b} matrix polynomials. In this paper, we tackle the even-grade-case for skew-symmetric matrix polynomials. In Table \ref{table1} we present a summary of the contributions mentioned above.

The reason for the even-grade-case being frequently omitted when structured matrix polynomials are studied, e.g., \cite{DeDD20b,Dmyt16,DmDo18}, is the lack of structured linearization templates \cite{DeDM14,MMMM10,MMMM13}. In this paper, we resolve this issue by adding an extra term to  a given matrix polynomial  of even grade (thus making it of odd grade) and tracking how  this addition affects the eigenstructure of the polynomial. The main result is that the set of skew-symmetric matrix polynomials of even grade and bounded rank is the closure of a single set with a certain complete eigenstructure { which is completely analogous to the one for the odd-grade-case described in \cite{DmDo18}}.

\begin{table}
\label{table1}
\centering
{\scriptsize
  \begin{tabular}{|c|c|c|c|c|}
    \hline
         &  & \\
     & {\bf Pencils ($d =1$) } & {\bf Polynomials ($d >1$) } \\
              &  & \\ \hline
                &&  \\[-0.8em]
    {\bf General} & \begin{tabular}{l} De Ter\'an and Dopico, \\2008, \cite{DeDo08} \\  \# = $r + 1$ \\ De Ter\'an, Dopico\\ and Landsberg, \\ 2017, \cite{DeDL17} \end{tabular} & \begin{tabular}{l} Dmytryshyn and Dopico, \\2017, \cite{DmDo17} \\ \# = $rd + 1$ \end{tabular} \\ [0.2em] \hline
      &&  \\[-0.8em]
    {\bf Skew-Symmetric} & \begin{tabular}{l} Dmytryshyn and Dopico, \\ 2018, \cite{DmDo18} \\ \# = $1$ \end{tabular} & \begin{tabular}{l} Dmytryshyn and Dopico,\\ 2018, \cite{DmDo18} \\ odd $d$\\ \# =  $1$ \\  \\  {\bf even $d$, this paper} \end{tabular}  \\ [0.2em] \hline
      &&  \\[-0.8em]
     {\bf  Symmetric} & \begin{tabular}{l} De Ter\'an, Dmytryshyn\\ and Dopico, \\ 2020, \cite{DeDD20a} \\  \# = $\left\lfloor r/2 \right\rfloor + 1$ \end{tabular} &  \begin{tabular}{l} De Ter\'an, Dmytryshyn\\ and Dopico, \\ 2020, \cite{DeDD20b} \\odd $d$\\ \# = $\left\lfloor r \, d /2 \right\rfloor + 1$ \\   \end{tabular}  \\ [0.2em] \hline
       &&  \\[-0.8em]
    \begin{tabular}{l}  {\bf  T-(anti)palindromic} \\ {\bf(or T-even and odd)}  \end{tabular}  &   \begin{tabular}{l}  De Ter\'an, 2018, \cite{DeTe18}  \\ { \# = $1$} \end{tabular}& { open} \\ [0.2em]\hline
  &&  \\[-0.7em]
   {\bf  (skew-)Hermitian} & \begin{tabular}{l} De Ter\'an, Dmytryshyn\\ and Dopico, \\ 2022, \cite{DeDD22} \\  \# = $\left\lfloor (r+3)/2 \right\rfloor \left(\left\lfloor r/2 \right\rfloor + 1\right)$ \end{tabular} & { open}  \\ [0.2em] \hline
  \end{tabular}}
    \caption{Solved and open problems about generic eigenstructures of sets of general and structured matrix polynomials with bounded rank and degree. Notation: $r$ is the rank, $d$ is the grade, and \# is the number of generic eigenstructures.}
    \label{tab1}
\end{table}


One reason to study generic sets is the possible applications in the investigation of the effect of low rank perturbations on the spectral information of pencils and matrix polynomials \cite{Batzke14,Batzke15,Batzke16, DeDo07,DeDo09,DeDo16,DeDoMo08,DeMM22, HMP19, HMP23, MMW17}. We also refer the reader to the introductions of the papers cited in Table \ref{tab1} for more references and background information. Determining the generic eigenstructures of particular sets of structured matrix pencils and matrix polynomials (in particular, those with bounded rank) is also useful for obtaining the stratification of structured matrix pencils and polynomials. Describing the stratification of matrix pencils and polynomials has been considered in some references by different authors, for several sets of matrix pencils and polynomials, like \cite{EdEK99} for general matrix pencils, \cite{ElJK09} for controllability and observability pairs, \cite{DmJK17} for system pencils associated with state-space systems, \cite{JoKV13} for full rank matrix polynomials, and \cite{DJKV19} for general matrix polynomials, as well as for skew-symmetric matrix pencils \cite{DmKa14} and odd-grade polynomials \cite{Dmyt15}.


To facilitate the reading of this paper and its comparison with previous results, we keep its structure and style as close as possible to the paper \cite{DmDo18} that covers the odd-grade-case. In this respect, we also warn the reader that several repetitions of background concepts and auxiliary results appearing in \cite{DmDo18} are unavoidable to keep this paper self-contained. Thus, Section \ref{pencils} includes some basic results on general and skew-symmetric pencils and { also a new} description of the set of skew-symmetric matrix pencils with rank { at most} $2 w$ and fixed number of { canonical} blocks associated  with the infinite eigenvalue. Section \ref{sect.polys} presents a number of preliminary { known} results on skew-symmetric matrix polynomials { and some new auxiliary results on orbits of skew-symmetric matrix polynomials}. In Section \ref{sec.main} we prove our main result for generic skew-symmetric matrix polynomials of fixed even grade and fixed rank, i.e., Theorem \ref{mainth}. Moreover, Theorem \ref{mainth} is combined with the corresponding result for odd-grade skew-symmetric matrix polynomials obtained in \cite{DmDo18} for presenting in Theorem \ref{anydth} the general result, which is independent of the parity of the grade. Finally, Section \ref{sec.codim} includes the computation of the codimension of the orbit of the generic skew-symmetric matrix polynomials of fixed even grade and bounded rank, which requires a different approach to the  one used to obtain the corresponding result for the odd-grade-case considered in \cite{DmDo18}. The conclusions and future research on generic eigenstructures of matrix polynomials are discussed in Section \ref{sect.conclusions}.

\section{Skew-symmetric matrix pencils}
\label{pencils} In this paper we consider { matrix pencils and matrix polynomials whose matrix coefficients have complex entries} { namely, a matrix polynomial is of the form $P(\lambda)=\sum_{i=0}^d\lambda^{i}A_i$, with $A_i\in\mathbb C^{n\times n}$, and a matrix pencil is a matrix polynomial with $d=1$. For matrix pencils we use calligraphic letters}. { For brevity, the dependence on $\lambda$ in $P(\lambda)$ will be often omitted, specially in the proofs, and we will use just $P$ for denoting a matrix polynomial, and similarly for matrix pencils.}

The rank of a matrix polynomial  $P$, denoted by $\rank P$, is  the largest size of a non-identically zero minor  (see \cite[Ch. XII, \S3]{Gant59} for matrix pencils). 

\subsection{Preliminaries}
In this section we recall the Kronecker canonical form (KCF) of general matrix pencils and the canonical form of skew-symmetric matrix pencils under congruence. Define
$\overline{\mathbb C} := \mathbb C \cup  \infty$.
For each $k=1,2, \ldots $, { and for each $\mu \in \mathbb{C}$} define the $k\times k$
matrices
\begin{equation*}
J_k(\mu):=\begin{bmatrix}
\mu&1&&\\
&\mu&\ddots&\\
&&\ddots&1\\
&&&\mu
\end{bmatrix},\qquad
I_k:=\begin{bmatrix}
1&&&\\
&1&&\\
&&\ddots&\\
&&&1
\end{bmatrix},
\end{equation*}
and for each $k=0,1, \ldots $, define the $k\times
(k+1)$ matrices
\begin{equation*}
F_k :=
\begin{bmatrix}
0&1&&\\
&\ddots&\ddots&\\
&&0&1\\
\end{bmatrix}, \qquad
G_k :=
\begin{bmatrix}
1&0&&\\
&\ddots&\ddots&\\
&&1&0\\
\end{bmatrix}.
\end{equation*}
All non-specified entries of $J_k(\mu), I_k, F_k,$ and $G_k$ are zeros.

Two $m \times n$ matrix pencils $\lambda A - B$ and $\lambda C - D$ are called {\em strictly equivalent} if and only if there are non-singular matrices $Q$ and $R$ such that $Q^{-1}AR =C$ and $Q^{-1}BR=D$. We also define the {\it orbit} of $\lambda A - B$ under the
action of the group $GL_m(\mathbb C) \times GL_n(\mathbb C)$ on the space of all matrix pencils by strict equivalence as follows:
\[
\orb^e (\lambda A - B) = \{Q^{-1} (\lambda A - B) R \ : \ Q \in GL_m(\mathbb C), R \in GL_n(\mathbb C)\}.
\]
    
Theorem \ref{kron} introduces the Kronecker Canonical Form (KCF) of  matrix pencils.

\begin{theorem}{\rm \cite[Ch. XII, Sect. 4]{Gant59}}\label{kron}
Each $m \times n$ matrix pencil $\lambda A - B$ is strictly equivalent
to a direct sum, uniquely determined up
to permutation of summands, of pencils of the form
\begin{align*}
{\cal E}_k(\mu)&:=\lambda I_k - J_k(\mu), \text { with} \mu \in \mathbb C, \quad {\cal E}_k(\infty):=\lambda J_k(0) - I_k, \\
{\cal L}_k&:=\lambda G_k - F_k, \quad \text{ and } \quad {\cal L}_k^T:=\lambda G_k^T - F^T_k.
\end{align*}
This direct sum is called the KCF of $\lambda A -B$.
\end{theorem}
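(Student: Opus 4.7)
The plan is to split the theorem into existence and uniqueness, and within existence to peel off all singular summands first and then to bring the remaining regular part into Weierstrass form. As a first step I would treat $\lambda A - B$ as a linear map between polynomial vector spaces, and consider its right null space as a module over $\mathbb{C}[\lambda]$. If this null space is nontrivial, I would pick a polynomial vector $x(\lambda)=\sum_{i=0}^{k} \lambda^{i} x_i$ of minimal degree $k$ satisfying $(\lambda A - B)x(\lambda)=0$. Expanding the identity coefficient by coefficient gives $B x_0 = 0$, $A x_i = B x_{i+1}$ for $0\le i<k$, and $A x_k = 0$, and the minimality of $k$ forces both the families $\{x_0,\dots ,x_k\}$ and $\{Ax_0,\dots ,A x_{k-1}\}$ to be linearly independent. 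The main technical step is then to complete these two families to bases of $\mathbb{C}^n$ and $\mathbb{C}^m$, respectively, in which $\lambda A - B$ acquires the block form $\mathcal{L}_k \oplus (\lambda A' - B')$; the nontrivial point is that the complementary direct summand can genuinely be chosen, and this is where the minimality of $k$ is used most essentially.

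After iterating this reduction (together with its transpose version, which strips off the $\mathcal{L}_k^T$ blocks using polynomial vectors in the left null space), I am left with a square regular pencil $\lambda \widetilde A - \widetilde B$ whose determinant is a nonzero polynomial in $\lambda$. I would then pick a scalar $\mu_0$ such that $\mu_0 \widetilde A - \widetilde B$ is invertible, premultiply by its inverse and rescale $\lambda\mapsto \lambda+\mu_0$, obtaining a pencil of the form $\lambda M - I$ with $M$ possibly singular. Decompose $\mathbb{C}^n = \ker M^p \oplus \operatorname{im} M^p$ for $p$ large enough so this sum is invariant and stable; on $\operatorname{im} M^p$ the operator $M$ is invertible, giving the finite eigenvalue blocks $\mathcal{E}_k(\mu)$ via Jordan canonical form of $M^{-1}$, and on $\ker M^p$ the operator $M$ is nilpotent, producing the blocks $\mathcal{E}_k(\infty) = \lambda J_k(0) - I_k$ after applying the Jordan form to $M|_{\ker M^p}$.

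For uniqueness I would read the blocks off from strict-equivalence invariants. The right and left minimal indices, determined respectively by the sequences of dimensions $\dim_{\mathbb{C}}\{x\in \mathbb{C}[\lambda]^n : (\lambda A - B)x=0,\ \deg x \le j\}$ and its transpose analog, recover the sizes of the $\mathcal{L}_k$ and $\mathcal{L}_k^T$ blocks; the finite elementary divisors $(\lambda-\mu)^k$ come from the Smith normal form of $\lambda A - B$ over $\mathbb{C}[\lambda]$; and the blocks $\mathcal{E}_k(\infty)$ are obtained by applying the same Smith invariants to the reversed pencil $A - \lambda B$ at $0$. I anticipate the hardest part to be not the spectral analysis of the regular piece, which is essentially Jordan theory, but the precise combinatorial/linear-algebraic bookkeeping in the singular reduction: showing that the minimal-degree null vector really does split off a clean $\mathcal{L}_k$ summand as a direct complement, rather than only a subpencil, and that iterating this procedure terminates with the correct regular remainder.
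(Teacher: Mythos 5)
The paper gives no proof of this theorem: it is quoted directly from Gantmacher~[Ch.~XII], and your outline is precisely the classical Kronecker reduction given there (minimal-degree polynomial null vectors splitting off $\mathcal{L}_k$ summands, the transposed argument for $\mathcal{L}_k^T$, Weierstrass/Fitting theory for the regular remainder, and uniqueness via the minimal indices together with the Smith forms of the pencil and of its reversal). Your outline is correct, and you have rightly singled out the one genuinely delicate step, namely showing that the coupling block in $\left[\begin{smallmatrix}\mathcal{L}_k & \ast \\ 0 & \lambda A'-B'\end{smallmatrix}\right]$ can be removed by strict equivalence, which relies on the fact that the reduced pencil admits no polynomial null vector of degree less than $k$.
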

\noindent  The blocks ${\cal E}_k(\mu)$ and ${\cal E}_k(\infty)$  are associated to the finite and infinite eigenvalues, respectively, { and all together} form the {\em regular part} of $\lambda A - B$. The blocks ${\cal L}_k$ and ${\cal L}_k^T$  are associated to the  right and  left minimal indices, respectively  (see page \pageref{mi}),  and all together form the {\em singular part} of $\lambda A - B$. The number of blocks ${\cal L}_k$ (respectively, ${\cal L}_k^T$) in the KCF of $\lambda A - B$ is equal to the dimension of the right (respectively, left) rational null-space of $\lambda A - B$.


A pencil $\lambda A - B$ is {\em skew symmetric} if and only if $(\lambda A - B)^T = -(\lambda A - B)$.
An $n \times n$ matrix pencil $\lambda A - B$ is called {\it congruent} to $\lambda C - D$ if there is a non-singular matrix $S$ such that $S^{T}AS =C$ and $S^{T}BS=D$.
We also define {   the} {\it congruence orbit} of $\lambda A - B$ under the action of the group $GL_n(\mathbb C)$ on the space of all skew-symmetric matrix pencils by congruence as follows:
\[
\orb^c (\lambda A - B) = \{S^{T} (\lambda A - B) S \ : \ S \in GL_n(\mathbb C)\}.
\]

In Theorem \ref{lkh} we recall the canonical form under congruence of skew-symmetric matrix pencils, so called {\em skew-symmetric KCF}.
\begin{theorem}{\rm \cite{Thom91} }\label{lkh}
Each skew-symmetric $n \times n$ matrix pencil $\lambda A - B$ is congruent
to a direct sum,
uniquely determined up
to permutation of
summands, of pencils of
the form
\begin{align*}
{\cal H}_{h}(\mu)&:=
\lambda
\begin{bmatrix}0&I_h\\
-I_h &0
\end{bmatrix} -
\begin{bmatrix}0&J_h(\mu)\\
-J_h(\mu)^T &0
\end{bmatrix}
,\quad \mu \in\mathbb C,\\
{\cal K}_k&:=
\lambda
\begin{bmatrix}0&J_k(0)\\
-J_k(0)^T &0
\end{bmatrix} -
\begin{bmatrix}0&I_k\\
-I_k&0
\end{bmatrix},\\
{\cal M}_m&:=
\lambda
\begin{bmatrix}0&G_m\\
-G_m^T&0
\end{bmatrix} -
\begin{bmatrix}0&F_m\\
-F_m^T &0
\end{bmatrix}.
\end{align*}
\end{theorem}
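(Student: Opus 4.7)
The plan is in two stages: uniqueness first, then existence, both anchored to the ordinary Kronecker form of Theorem~\ref{kron}.

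For uniqueness, I would note that any congruence $\lambda A-B\mapsto S^T(\lambda A-B)S$ is a special case of strict equivalence (set $Q=S^{-T}$ and $R=S$), so two canonical forms of the type stated in the theorem that are congruent must have identical ordinary KCFs. A direct verification, by permuting block rows/columns and (for ${\cal K}_k$) rescaling, yields
\begin{align*}
{\cal H}_h(\mu) & \text{ is strictly equivalent to } {\cal E}_h(\mu)\oplus {\cal E}_h(\mu),\\
{\cal K}_k & \text{ is strictly equivalent to } {\cal E}_k(\infty)\oplus {\cal E}_k(\infty),\\
{\cal M}_m & \text{ is strictly equivalent to } {\cal L}_m\oplus {\cal L}_m^T.
\end{align*}
Because these three families produce pairwise disjoint types of Kronecker summands (paired Jordan blocks at finite points, paired Jordan blocks at $\infty$, and matched singular pairs), the multiplicities of the ${\cal H}_h(\mu)$, ${\cal K}_k$, and ${\cal M}_m$ summands can be read off uniquely from the ordinary KCF, which proves the uniqueness claim.

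For existence, the plan is to use Theorem~\ref{kron} together with two constraints forced by skew-symmetry: (i) the singular blocks in the KCF appear in matched pairs ${\cal L}_m$ and ${\cal L}_m^T$; (ii) every Jordan block ${\cal E}_k(\mu)$, $\mu\in\overline{\mathbb C}$, appears with even multiplicity. Constraint (i) is immediate from $(\lambda A-B)^T=-(\lambda A-B)$: the pencil is strictly equivalent to its transpose, transposition swaps ${\cal L}_m\leftrightarrow {\cal L}_m^T$ and leaves the Jordan part invariant up to strict equivalence, so the singular indices must match. Constraint (ii) I would establish by viewing a regular skew-symmetric pencil as a nondegenerate skew-symmetric $\mathbb{C}[\lambda]$-bilinear form on a finitely generated torsion module over $\mathbb{C}[\lambda]$ (handling the eigenvalue $\infty$ via the reversal, which preserves skew-symmetry), and invoking the classical structure theorem that decomposes such forms into orthogonal sums of hyperbolic planes supported on cyclic summands. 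Once (i) and (ii) are in hand, I realize the KCF in a chosen basis and, pair by pair, perform explicit congruences that convert ${\cal E}_h(\mu)\oplus{\cal E}_h(\mu)$, ${\cal E}_k(\infty)\oplus{\cal E}_k(\infty)$, and ${\cal L}_m\oplus{\cal L}_m^T$ into ${\cal H}_h(\mu)$, ${\cal K}_k$, and ${\cal M}_m$ respectively; these last reductions are short computations with block-off-diagonal Gram matrices.

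The main obstacle is constraint (ii), which has no analogue in the unstructured KCF. The Pfaffian identity $\det(\lambda A-B)=\operatorname{Pf}(\lambda A-B)^2$ only forces the characteristic polynomial to be a perfect square; it does \emph{not} by itself force each elementary divisor to be squared, since the same determinant is compatible with many distinct invariant-factor patterns. Hence the delicate step is the module-theoretic pairing sketched above, or, alternatively, an induction on $n$ that at each stage either peels off an ${\cal M}_m$ block (when the pencil is singular, through a skew-symmetric adaptation of a Van Dooren--type singular-block extraction) or, in the regular case, localizes at a single eigenvalue and performs a hyperbolic reduction of the resulting nilpotent skew-symmetric bilinear form. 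Either route carries nontrivial bookkeeping, and is the single substantive input beyond Kronecker's theorem.
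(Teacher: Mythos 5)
The paper does not actually prove this theorem; it is quoted from Thompson's classification \cite{Thom91}, so there is no in-paper argument to measure yours against. On its own terms, your uniqueness argument is complete and correct: congruence is a special case of strict equivalence, the three block families produce pairwise disjoint types of Kronecker summands (${\cal H}_h(\mu)$ strictly equivalent to ${\cal E}_h(\mu)\oplus{\cal E}_h(\mu)$, ${\cal K}_k$ to ${\cal E}_k(\infty)\oplus{\cal E}_k(\infty)$, ${\cal M}_m$ to ${\cal L}_m\oplus{\cal L}_m^T$), and the multiplicities of the congruence-canonical summands are therefore determined by the (unique) KCF.

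The existence half, however, has a genuine hole exactly where you flag it, and it is the entire non-Kroneckerian content of the theorem. Constraint (i) is fine: $(\lambda A-B)^T=-(\lambda A-B)$ makes the pencil strictly equivalent to its transpose, so left and right minimal indices coincide as multisets. But constraint (ii) --- even multiplicity of every elementary divisor, finite and infinite --- is only named, not proved: you appeal to a ``classical structure theorem'' for alternating forms on torsion $\mathbb{C}[\lambda]$-modules without defining the $\mathbb{C}(\lambda)/\mathbb{C}[\lambda]$-valued pairing, checking it is alternating and nondegenerate, showing that a cyclic summand is totally isotropic for a $\mathbb{C}[\lambda]$-bilinear alternating form (whence the hyperbolic pairing and the doubling), or explaining how the singular part is split off congruently so that the module picture applies to the regular part at all. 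As you correctly note, the Pfaffian identity is too weak to substitute for this. There is also a second, unacknowledged gap at the very end: once (i) and (ii) give you that $\lambda A-B$ is \emph{strictly equivalent} to a direct sum of ${\cal H}$, ${\cal K}$, ${\cal M}$ blocks, passing to \emph{congruence} is not automatic. Either you invoke the fact that strictly equivalent skew-symmetric pencils are congruent --- itself a nontrivial theorem, essentially \cite[Lem.~3.8]{DmKa14}, and arguably equivalent in difficulty to what you are proving --- or you must construct the Kronecker chains to be isotropic and dually paired with respect to the form, which is precisely the ``nontrivial bookkeeping'' you defer. In short: the skeleton is right and standard, but the two load-bearing steps are still citations to results you have not established.
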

\noindent Notably the block ${\cal M}_0$ is the $1 \times 1$ zero matrix pencil.
Similarly to the KCF, the blocks ${\cal H}_{h}(\mu)$ and ${\cal K}_k$ correspond to the finite and infinite eigenvalues, respectively, and all together form the {\em regular part} of $\lambda A - B$. The blocks ${\cal M}_m$ correspond to the right (column) and left (row) minimal indices, which are equal in the case of skew-symmetric matrix pencils, and form the {\em singular part} of $\lambda A - B$. Theorem \ref{lkh} also shows that skew-symmetric matrix pencils always have even rank.

 For brevity, we will refer to the number of $\cal K$-blocks or $\cal M$-blocks of a skew-symmetric matrix pencil to mean the number of blocks of the form ${\cal K}_k$ or ${\cal M}_k$, respectively, in the skew-symmetric KCF of the pencil.

\subsection{Generic skew-symmetric matrix pencils with bounded rank and fixed number of blocks corresponding to the  infinite eigenvalue.} \label{sec.main1}

In this section we find the most generic { skew-symmetric matrix pencils with rank bounded by a fixed value and with exactly $r$ ${\cal K}$-blocks in the skew-symmetric KCF}. First, we recall some definitions and results.

By $\PEN_{m \times n}$ we denote the space of all $m\times n$ matrix pencils. A distance in $\PEN_{m \times n}$ can be defined with the Frobenius norm of complex matrices \cite{Highambook} as $d(\lambda A - B, \lambda C - D) := \sqrt{\|A-C\|_F^2 + \|B-D\|_F^2}$, which makes $\PEN_{m \times n}$ { to be} a metric space. This metric allows us to consider closures of subsets of $\PEN_{m \times n}$, in particular, closures of orbits by strict equivalence, denoted by $\overline{\orb^e}(\lambda A - B)$. Using these concepts, Theorem \ref{erelations}, see \cite{Bole98, EdEK99}, describes all the possible changes in the KCF of a general unstructured matrix pencil under arbitrarily small perturbations.
If in the KCF of a matrix pencil the blocks ${\cal X}$ are changed to the blocks ${\cal Y}$ (${\cal X}$ and ${\cal Y}$ of the same size) we write ${\cal X} \rightsquigarrow {\cal Y}$.

\begin{theorem}{\rm \cite{Bole98}}\label{erelations}
Let ${\cal P}_1$ and ${\cal P}_2$ be two matrix pencils in KCF. Then,  $\overline{\orb^e}({\cal P}_1) \supset \orb^e ({\cal P}_2)$
if and only if ${\cal P}_1$ can be obtained from ${\cal P}_2$ { by} changing canonical blocks of ${\cal P}_2$ { after} applying a sequence of rules, { that can be} of the { following} six types:
\begin{enumerate}
\item ${\cal L}_{j-1} \oplus {\cal L}_{k+1} \rightsquigarrow {\cal L}_j \oplus {\cal L}_k$, $1\le j \le k;$
\item ${\cal L}_{j-1}^T \oplus {\cal L}_{k+1}^T \rightsquigarrow {\cal L}_j^T \oplus {\cal L}_k^T$, $1\le j \le k;$
\item ${\cal L}_{j} \oplus {\cal E}_{k+1}(\mu) \rightsquigarrow {\cal L}_{j+1} \oplus {\cal E}_k(\mu)$, $j,k=0,1,2, \dots$ and $\mu \in \overline{\mathbb C};$
\item ${\cal L}_{j}^T \oplus {\cal E}_{k+1}(\mu) \rightsquigarrow {\cal L}_{j+1}^T \oplus {\cal E}_k(\mu)$, $j,k=0,1,2, \dots$ and $\mu \in \overline{\mathbb C};$
\item ${\cal E}_{j}(\mu) \oplus {\cal E}_{k}(\mu) \rightsquigarrow {\cal E}_{j-1}(\mu) \oplus {\cal E}_{k+1}(\mu)$, $1\le j \le k$ and $\mu \in \overline{\mathbb C};$
\item ${\cal L}_{p} \oplus {\cal L}_{q}^T \rightsquigarrow \bigoplus_{i=1}^t{\cal E}_{k_i}(\mu_i)$, if $p+q+1= \sum_{i=1}^t k_i$ and $\mu_i \neq \mu_{i'}$ for $i \neq i', \mu_i \in \overline{\mathbb C}.$
\end{enumerate}
Observe that in the rules above any block ${\cal E}_0 (\mu)$ should be understood as the empty matrix.
\end{theorem}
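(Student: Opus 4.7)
The plan is to prove the biconditional in the two standard directions.

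For the ``if'' direction, the goal is to verify rule by rule that the transformation produces a pencil in the orbit closure. For each of the six rules, one constructs an explicit one-parameter family $\{{\cal Q}_\varepsilon\}\subset\orb^e({\cal P}_2)$ whose limit as $\varepsilon\to 0$ is strictly equivalent to ${\cal P}_1$. For rules 1--2, perturb the summand ${\cal L}_j\oplus {\cal L}_k$ (respectively ${\cal L}_j^T\oplus {\cal L}_k^T$) by a small off-diagonal coupling of order $\varepsilon$ and recompute the KCF of the perturbed pencil. Rules 3--4 are analogous, coupling a singular block to an adjacent regular block at eigenvalue $\mu$. Rule 5 is the classical splitting/coalescing of Jordan blocks at a fixed eigenvalue. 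Rule 6 follows from the well-known fact that small generic perturbations of ${\cal L}_p\oplus {\cal L}_q^T$ produce a regular pencil of size $(p+q+1)\times(p+q+1)$ whose spectrum can be placed at any prescribed collection of $t$ distinct points $\mu_1,\ldots,\mu_t\in\overline{\mathbb C}$, with Jordan sizes summing to $p+q+1$.

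For the ``only if'' direction, the plan is to introduce a family of lower-semicontinuous integer invariants $\{r_k(\mu,{\cal P})\}_{\mu\in\overline{\mathbb C},\,k\ge 0}$ built from ranks of block-Toeplitz (Sylvester-type) matrices attached to $\lambda A-B$. Since these invariants jointly recover the KCF, the inclusion $\overline{\orb^e}({\cal P}_1)\supset\orb^e({\cal P}_2)$ forces a finite list of inequalities between the Weyr-type partitions attached to each eigenvalue of ${\cal P}_1$ and ${\cal P}_2$, and between their right/left minimal index partitions. The remaining task is then combinatorial: show that such a system of inequalities holds if and only if ${\cal P}_1$ is obtainable from ${\cal P}_2$ by a finite sequence of the six rules. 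A minimal-counterexample argument together with induction on the total number of canonical blocks should close this step.

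The main obstacle is precisely this combinatorial step, because rules 3, 4, and 6 couple the regular and singular parts of the KCF: a single pair ${\cal L}_p\oplus {\cal L}_q^T$ can be traded for a collection of regular blocks at arbitrary distinct eigenvalues, so the usual dominance order on a single partition does not suffice. One has to track jointly the Weyr data at every $\mu\in\overline{\mathbb C}$ together with the ``singular budget'' available for trading, and always identify at least one applicable rule whenever the inequalities hold strictly. Rules 1--2 and 5 then reduce to the ordinary dominance-order moves on partitions, and rules 3--4 are the unit transfers across the singular/regular boundary.
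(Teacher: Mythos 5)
First, note that the paper does not prove this statement at all: Theorem~\ref{erelations} is quoted verbatim from the literature (\cite{Bole98}, see also \cite{EdEK99}) and used as a black box, so there is no in-paper argument to compare yours against. Judged on its own, your proposal is a reasonable roadmap for the classical proof, but it is a plan rather than a proof, and it has one directional error and one essential gap.

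The directional error: you propose to build a family $\{{\cal Q}_\varepsilon\}\subset\orb^e({\cal P}_2)$ converging to something equivalent to ${\cal P}_1$. That would establish ${\cal P}_1\in\overline{\orb^e}({\cal P}_2)$, i.e., $\orb^e({\cal P}_1)\subseteq\overline{\orb^e}({\cal P}_2)$, which is the reverse of the claimed inclusion. Since each rule ${\cal X}\rightsquigarrow{\cal Y}$ replaces blocks ${\cal X}$ of the \emph{less} generic pencil ${\cal P}_2$ by blocks ${\cal Y}$ of the \emph{more} generic ${\cal P}_1$, the correct construction perturbs ${\cal P}_2$ (e.g., couples ${\cal L}_{j-1}\oplus{\cal L}_{k+1}$, not ${\cal L}_j\oplus{\cal L}_k$) by an $O(\varepsilon)$ term so that the perturbed pencil has the KCF of ${\cal P}_1$; the resulting family lies in $\orb^e({\cal P}_1)$ and converges to ${\cal P}_2$, giving ${\cal P}_2\in\overline{\orb^e}({\cal P}_1)$ and hence $\overline{\orb^e}({\cal P}_1)\supset\orb^e({\cal P}_2)$. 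The essential gap is in the ``only if'' direction: passing from the lower-semicontinuous rank (Weyr/minimal-index) inequalities of Pokrzywa--De~Hoyos type to the existence of a finite sequence of the six rules is precisely the content of Boley's theorem, and your proposal defers it entirely to ``a minimal-counterexample argument together with induction \dots should close this step.'' You yourself identify the coupling between the regular and singular parts (rules 3, 4 and especially 6, where a pair ${\cal L}_p\oplus{\cal L}_q^T$ is traded for regular blocks at arbitrary distinct eigenvalues) as the obstacle; without an explicit algorithm that, given strict inequality of the invariants, always exhibits an applicable rule decreasing a suitable well-ordered quantity, the argument is not complete. As it stands, the proposal reproduces the statement's difficulty rather than resolving it, which is presumably why the paper cites the result instead of proving it.
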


The vector space of skew-symmetric matrix pencils of size $n\times n$ is denoted by $\PEN_{n\times n}^{ss}$. A distance in $\PEN_{n\times n}^{ss}$ is defined as in $\PEN_{n\times n}$ and, thus, the topology considered in $\PEN_{n\times n}^{ss}$ is the induced topology from $\PEN_{n\times n}$. With this topology, we also consider closures of subsets in $\PEN_{n\times n}^{ss}$ and, in particular, closures of orbits by congruence of skew-symmetric matrix pencils, which are denoted by $\overline{\orb^c} (\lambda A - B)$.

We often use expressions as ``the pencil ${\cal P}_1$ is more generic than the pencil ${\cal P}_2$'', whose meaning is that $\overline{\orb^e} ({\cal P}_1) \supset \orb^e ({\cal P}_2)$ or $\overline{\orb^c} ({\cal P}_1) \supset \orb^c ({\cal P}_2)$, depending on the context. In this language, the most generic skew-symmetric pencil with { rank at most $2w$ and with exactly $r$ ${\cal K}$-blocks associated to the infinite eigenvalue in the skew-symmetric KCF}, is a pencil such that the closure of its congruence orbit includes the congruence orbit of any other skew-symmetric pencil with { rank at most $2w$} and with exactly { $r$ ${\cal K}$-blocks}.  Define { $\Inf_r \subset \PEN_{n\times n}^{ss}$} as the set of all $n \times n$ skew-symmetric matrix pencils  having exactly $r$ { ${\cal K}$-blocks}.

\begin{theorem} \label{th:improvedDeDo}
Let $n,w$, and $r$ be integers such that $n \geq 2$, $2\leq 2w \leq n-1$ and  $r \leq w$. The set of $n \times n$ complex skew-symmetric matrix pencils { with rank at most $2w$ and with exactly $r$ ${\cal K}$-blocks corresponding to the infinite eigenvalue in its skew-symmetric KCF}, is a closed subset of $\Inf_r$ equal to $\overline{\orb^c}({\cal W}) \cap \Inf_r$, where
\begin{equation}\label{max}
{\cal W} =\diag(\underbrace{{\cal M}_{\alpha+1},\dots,{\cal M}_{\alpha+1}}_{s},
\underbrace{{\cal M}_{\alpha},\dots,{\cal M}_{\alpha}}_{n-2w-s}, \underbrace{{\cal K}_{1},\dots,{\cal K}_{1}}_{r})\,
\end{equation}
with $\alpha= \lfloor (w-r)/(n-2w) \rfloor$ and
$s\equiv (w-r) \, \mathrm{mod}\, (n-2w)$.
\hide{
Let us define, in the set of $n\times n$ complex skew-symmetric matrix pencils with
rank $2r_1$, the following skew-symmetric canonical form:
\begin{equation}\label{max}
{\cal W} (\lambda)=\diag(\underbrace{M_{\alpha+1},\hdots,M_{\alpha+1}}_{s},
\underbrace{M_{\alpha},\hdots,M_{\alpha}}_{n-2r_1-s})\,
\end{equation}
where $\alpha= \lfloor r_1/(n-2r_1) \rfloor$ and
$s= r_1 \, \mathrm{mod}\, (n-2r_1)$.
Then,
\begin{enumerate}
\item[\rm (i)] $\overline{\orb^c}({\cal M}) \supseteq\overline{\orb^c}({\cal X})$ for every $n_1\times n_1$ skew-symmetric matrix pencil ${\cal X}(\lambda)$ with rank at most $2r_1$.
\item[\rm (ii)] The set of $n_1 \times n_1$ complex matrix pencils with rank
at most $2r_1$ is a closed  subset of $\skewPEN_{n_1 \times n_1}$ equal to $\overline{\orb^e}({\cal M} )$.
\end{enumerate}
}
\end{theorem}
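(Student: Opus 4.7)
The plan is to establish the stated equality via two inclusions, after first verifying the structural claim for ${\cal W}$.

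By a direct block count using the identity $s + \alpha(n - 2w) = w - r$ (which follows from the definitions of $\alpha$ and $s$), I would confirm that ${\cal W}$ is an $n \times n$ skew-symmetric pencil of rank exactly $2w$ with exactly $r$ ${\cal K}$-blocks (namely the $r$ copies of ${\cal K}_1$), so ${\cal W}$ itself belongs to the target set. The easy inclusion $\overline{\orb^c}({\cal W}) \cap \Inf_r \subseteq \{{\cal P} \in \Inf_r : \rank {\cal P} \le 2w\}$ then follows because $\{\rank \le 2w\}$ is a closed subset of $\PEN_{n \times n}^{ss}$ (cut out by the vanishing of all $(2w+1) \times (2w+1)$ minors of the pencil), and this closed condition is preserved under taking closures; the same observation gives the claimed closedness in $\Inf_r$.

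For the converse inclusion, I would take any ${\cal P} \in \PEN_{n \times n}^{ss}$ with $\rank {\cal P} \le 2w$ and exactly $r$ ${\cal K}$-blocks in its skew-symmetric KCF,
\[
{\cal P} \sim \bigoplus_{i} {\cal H}_{h_i}(\mu_i) \oplus \bigoplus_{j=1}^{r} {\cal K}_{k_j} \oplus \bigoplus_{l} {\cal M}_{m_l},
\]
and build a finite chain of skew-symmetric congruence degenerations (the skew-symmetric analogue of Theorem \ref{erelations}, as developed in \cite{DmKa14}) going from ${\cal P}$ to ${\cal W}$, i.e., always toward a more generic pencil. If $\rank {\cal P} = 2w' < 2w$, I would first raise the rank to $2w$ by applying the skew-symmetric analogue of rule 6 (converting a pair ${\cal M}_p \oplus {\cal M}_q$ into a collection of ${\cal H}$-blocks at fresh finite eigenvalues) $w - w'$ times, which leaves the ${\cal K}$-block count unchanged and brings the ${\cal M}$-count to the required $n - 2w$. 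Next, every ${\cal H}$-block (both the original ones and those just created) is absorbed into an adjacent ${\cal M}$-block through the rank-preserving rule ${\cal M}_j \oplus {\cal H}_h(\mu) \rightsquigarrow {\cal M}_{j+h}$. Then each ${\cal K}_{k_j}$ is shrunk down to ${\cal K}_1$ by iterating ${\cal M}_j \oplus {\cal K}_k \rightsquigarrow {\cal M}_{j+1} \oplus {\cal K}_{k-1}$, preserving the ${\cal K}$-count at $r$. Finally, the ${\cal M}$-block sizes are equalized using ${\cal M}_{j-1} \oplus {\cal M}_{k+1} \rightsquigarrow {\cal M}_j \oplus {\cal M}_k$ until the distribution matches $\{\alpha, \alpha + 1\}$ with the multiplicities prescribed by ${\cal W}$.

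The main obstacle I anticipate is the combinatorial bookkeeping needed to run this chain for every admissible ${\cal P}$: checking that the required pairs of blocks are present at each step, that the ${\cal K}$-block count never drifts from $r$, that the rank budget $2w$ is respected throughout (noting that $2w \le n-1$ and $r \le w$ guarantee enough ${\cal M}$-blocks at every stage), and that the final ${\cal M}$-size distribution lands precisely on $\{\alpha, \alpha + 1\}$ with the multiplicities $n - 2w - s$ and $s$. The key quantitative invariant that makes this accounting go through is the identity $\#\{{\cal M}\text{-blocks}\} = n - \rank$, valid for every $n \times n$ skew-symmetric pencil, which rigidly ties the rank to the singular-part count and eliminates ambiguity in the distribution of ${\cal M}$-block sizes once the ${\cal H}$- and ${\cal K}$-contributions are fixed.
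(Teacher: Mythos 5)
Your proposal is correct and follows essentially the same route as the paper: both arguments reduce to the unstructured degeneration rules of Theorem \ref{erelations} applied in skew-symmetric pairs (justified via \cite{DmKa14}), first raising the rank to exactly $2w$ with rule 6, then eliminating finite elementary divisors and shrinking the infinite blocks to ${\cal K}_1$ with rules 3--4, and finally balancing the ${\cal M}$-block sizes with rules 1--2, with the paper deferring the last combinatorial step to \cite{DmDo17,DmDo18}. Your explicit treatment of the reverse inclusion and of closedness via lower semicontinuity of the rank is left implicit in the paper but is correct.
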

\begin{proof}
{ We follow a strategy similar to the one of the proof of \cite[Thm.~3.1]{DmDo18}, but taking into account the presence of $r$ ${\cal K}$-blocks, which were not present in \cite[Thm.~3.1]{DmDo18}}. More precisely, taking into account \cite[Lem. 3.8]{DmKa14} (or the stronger result \cite[Thm.~3.1]{DmKa14}), in this proof we work with the KCF rather than with the skew-symmetric KCF under congruence of Theorem \ref{lkh}, but we always apply rules from Theorem \ref{erelations} in pairs, such that the corresponding change of the skew-symmetric KCF { preserves transparently the skew-symmetry}.

{ Note that the most generic skew-symmetric pencil with rank at most $2w$ and exactly $r$ ${\cal K}$ blocks has rank exactly $2w$, because otherwise there would be more than $n-2w$ pairs of equal left and right minimal indices (equivalently, in the skew-symmetric KCF there would be more than $n-2w$ ${\cal M}$-blocks, since the number of left/right singular blocks is equal to the dimension of the rational left/right null-space of the pencil). Moreover, due to the even parity of the rank, there would be an excess of at least two pairs of equal left and right minimal indices. In that case, the rule 6 in Theorem \ref{erelations} can be applied twice to obtain a more generic skew-symmetric matrix pencil with rank larger in two units and with exactly $r$ ${\cal K}$-blocks. Therefore, we focus, in the rest of the proof, on  $n \times n$ skew-symmetric matrix pencils with rank exactly $2 w$ and with exactly $r$ ${\cal K}$-blocks. Each of such pencils} has the following KCF:
 \[
 \begin{split}
\diag({\cal L}_{\gamma_1},\dots,{\cal L}_{\gamma_{n-2w}}, &{\cal L}^T_{\gamma_1},\dots,{\cal L}^T_{\gamma_{n-2w}}, {\cal J}, {\cal J}, \\
&{\cal E}_{k_1}(\infty),\dots,{\cal E}_{k_r}(\infty),
{\cal E}_{k_1}(\infty),\dots,{\cal E}_{k_r}(\infty)),
\end{split}
\]
since the left and right singular blocks are paired up according { to Theorem \ref{lkh} and the} blocks ${\cal J}$ of the regular part associated with  the finite  eigenvalues are paired up, as  well as the blocks that correspond to the infinite eigenvalue.

Note that the sizes of { the $2r$ blocks} that correspond to the infinite eigenvalue, { i.e. ${\cal E}_{k_i}(\infty)$,} in the KCF of a generic skew-symmetric matrix pencil of rank equal to $2w$ and with exactly $r$ { ${\cal K}$-blocks} can not be larger than 1 (otherwise rules 3 and 4 from Theorem \ref{erelations} can be applied to obtain a more generic skew-symmetric pencil and to decrease the sizes of the $2r$ blocks ${\cal E}_{k_i}(\infty)$ to $1$). The remaining part of the proof repeats the proof of \cite[Thm.~3.1]{DmDo18}  with the difference that \cite[Thm. 2.7]{DmDo17} is now used for determining the most generic $(w-r) \times (n-r-w)$ matrix pencil and \cite[Thm. 2.8]{DmDo17} for determining the most generic $(n-r-w) \times (w-r)$ matrix pencil. The details can be found in \cite{DmDo18},  but the proof reduces to apply rules 3 and 4 from Theorem \ref{erelations} for every couple of identical single Jordan blocks associated with a finite eigenvalue (using a left and a right singular block ${\cal L}_j$ and ${\cal L}_j^T$, which are coupled) to eliminate these Jordan blocks by shrinking the size to $0$, and then applying rules 1 and 2 from Theorem \ref{erelations} to each couple of blocks ${\cal L}_{j-1}\oplus{\cal L}_{k+1}$ and ${\cal L}_{j-1}^T\oplus{\cal L}_{k+1}^T$.
\end{proof}

\section{{ Auxiliary results} on skew-symmetric matrix polynomials}
\label{sect.polys}
\subsection{Complete eigenstructure of skew-symmetric matrix polynomials}
\label{sect.prempolys}

We consider skew-symmetric $m\times m$ matrix polynomials $P(\lambda)$ of grade $d$, i.e., of degree less than or equal to $d$, over~$\mathbb C$:
\begin{equation*}
P(\lambda) = \lambda^{d}A_{d} + \dots +  \lambda A_1 + A_0,
\quad A_i^T=-A_i, \ A_i \in \mathbb C^{m \times m} \ \text{for } i=0, \dots, d.
\end{equation*}
{ Note that we do not require that the leading coefficient, $A_d$, of $P(\lambda)$ is nonzero. As usual, the degree of $P(\lambda)$, denoted as $\deg(P)$, is the largest index $k$ such that $A_k \ne 0$.}
 We denote the vector space of $m \times m$ skew-symmetric matrix polynomials of grade $d$ by $\POL_{d, m\times m}^{ss}$.
 We write $\POL$ instead of $\POL_{d, m\times m}^{ss}$,  if there is no risk of confusion. Note that $\POL_{1, n\times n}^{ss} = \PEN_{n\times n}^{ss}$. As in the case of pencils, see Sections \ref{pencils}, by using the Frobenius matrix norm of complex matrices \cite{Highambook}, a distance in $\POL_{d, m\times m}^{ss}$ is defined as $d(P,P') = \left( \sum_{i=0}^d || A_i - A'_i ||_F^2 \right)^{\frac{1}{2}}$, where $P(\lambda) = \sum_{i=0}^d \lambda^i A_i$ and $P'(\lambda) = \sum_{i=0}^d \lambda^i A'_i$, making $\POL_{d, m\times m}^{ss}$ a metric space  with the { Euclidean topology induced by this distance}. For convenience, we define the Frobenius norm of the matrix polynomial $P$ as $||P(\lambda)||_F = \left( \sum_{i=0}^d || A_i ||_F^2 \right)^{\frac{1}{2}}$. An arbitrarily small pencil or matrix polynomial is a pencil or matrix polynomial with arbitrarily small Frobenius norm.

Two matrix polynomials $P(\lambda)$ and $Q(\lambda)$ are called {\it unimodularly congruent} if $F(\lambda)^T P(\lambda) F(\lambda)=Q(\lambda)$ for some unimodular matrix polynomial $F(\lambda)$ (i.e. $\det F(\lambda) \in \mathbb C \backslash \{0\}$), see also \cite{MMMM13}.
\begin{theorem}{\rm \cite{MMMM13}} \label{tsmiths}
Let $P(\lambda)$ be a skew-symmetric $m\times m$ matrix polynomial. Then there exist $r \in \mathbb N$ with $2r \le m$ and a unimodular matrix polynomial $F(\lambda)$ such that
\begin{equation*}
F(\lambda)^T P(\lambda) F(\lambda) =
\begin{bmatrix}
0 & g_1(\lambda)\\
-g_1(\lambda) & 0
\end{bmatrix} \oplus
\dots
\oplus
\begin{bmatrix}
0 & g_r(\lambda)\\
-g_r(\lambda) & 0
\end{bmatrix} \oplus
0_{m-2r}=:S(\lambda),
\end{equation*}
where $g_j$ is a monic polynomial, for $j=1, \dots, r$, and $g_j(\lambda)$ divides $g_{j+1}(\lambda)$, for $j=1, \dots, r-1$. Moreover, the canonical form $S(\lambda)$ is unique.
\end{theorem}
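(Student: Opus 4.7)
The plan is to prove existence by induction on $m$ using simultaneous row/column operations that preserve skew-symmetry, and then to deduce uniqueness by relating the $g_j$ to the invariant factors of the classical Smith normal form under unimodular equivalence. The key observation throughout is that congruence by a unimodular $F(\lambda)$ corresponds, at the level of elementary transformations, to performing the same elementary operation simultaneously on a row and on the matching column, so the whole argument must be carried out in these coupled pairs.

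For existence, the base case $m=1$ is immediate since any $1\times 1$ skew-symmetric polynomial is zero in characteristic zero, and $m=2$ reduces $P(\lambda)=\begin{bmatrix}0&p\\-p&0\end{bmatrix}$ to the required form by scaling with an invertible constant. For the inductive step, among all polynomials in the unimodular congruence class of $P(\lambda)$ I would select one for which the minimum degree of a nonzero off-diagonal entry is as small as possible (diagonal entries are automatically zero). After a simultaneous permutation placing this entry at position $(1,2)$ and rescaling to make it monic, call it $g_1(\lambda)$. The crucial claim is that $g_1$ divides every other entry: otherwise, Euclidean division $p_{ij}=q\cdot g_1+r$ with $\deg r<\deg g_1$ would allow a coupled row+column operation producing a new skew-symmetric polynomial, unimodularly congruent to $P(\lambda)$, whose entry at $(i,j)$ is $r$, contradicting minimality. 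Once $g_1$ divides everything, coupled elementary operations clear the first two rows and columns outside the $(1,2)$ and $(2,1)$ positions, producing $\begin{bmatrix}0&g_1\\-g_1&0\end{bmatrix}\oplus P_1(\lambda)$ with $P_1$ skew-symmetric of size $m-2$. The same minimality argument shows $g_1$ divides every entry of $P_1$, so the divisibility chain $g_1\mid g_2$ persists, and the induction hypothesis finishes the existence.

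For uniqueness, I would invoke the classical Smith form: $P(\lambda)$ is unimodularly equivalent (allowing different left and right factors) to $\mathrm{diag}(s_1,\ldots,s_{2r},0,\ldots,0)$ with $s_i\mid s_{i+1}$, and the invariant factors $s_i$ are computed from greatest common divisors of $i\times i$ minors of $P(\lambda)$, hence depend only on $P(\lambda)$. On the other hand, the canonical form $S(\lambda)$ produced above is unimodularly equivalent (just swap rows in each $2\times 2$ antidiagonal block and adjust signs) to $\mathrm{diag}(g_1,g_1,g_2,g_2,\ldots,g_r,g_r,0,\ldots,0)$. Matching these gives $s_{2j-1}=s_{2j}=g_j$, so the monic polynomials $g_j$ and the integer $r$ are uniquely determined by $P(\lambda)$.

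The main obstacle is the technical bookkeeping needed to ensure that the Euclidean-style degree-reduction step actually decreases the target degree while preserving skew-symmetry. In the standard Smith form derivation, row and column operations are independent, so a single row reduction suffices; here, each row operation forces the mirror column operation via $E^T P E$, which can re-introduce high-degree entries elsewhere unless the pivot entry and the operations are chosen with care (in particular, handling rows $1$ and $2$ as a coupled pair rather than separately). Making this coupling precise, and verifying that the minimality hypothesis on the pivot entry is genuinely contradicted after a coupled operation, is the delicate point that distinguishes this argument from the standard Smith form.
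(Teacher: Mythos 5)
The paper offers no proof of Theorem \ref{tsmiths}; it is quoted directly from \cite{MMMM13}, so there is nothing in the paper to compare your argument with and it must be judged on its own. Your plan --- induction on $m$ via unimodular congruences built from coupled elementary row/column operations, followed by uniqueness via the ordinary Smith form of $\diag(g_1,g_1,\ldots,g_r,g_r,0,\ldots,0)$ --- is sound, and the uniqueness half is complete as written. It also helps to note explicitly why the coupling is harmless: for $E=I+\alpha e_ie_j^T$ one has $E^TPE=P+\alpha (Pe_i)e_j^T+\alpha e_j(e_i^TP)+\alpha^2p_{ii}e_je_j^T$, and the quadratic term vanishes because the diagonal of a complex skew-symmetric polynomial is zero, so a unimodular congruence by an elementary matrix really is ``column operation plus mirrored row operation'' with no cross term.

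The one step that is not correct as literally stated is the divisibility claim. Your Euclidean reduction does place the remainder $r$ at the desired position when the offending entry lies in row $1$ (or $2$): subtracting $q$ times column $2$ from column $j$ turns $p_{1j}$ into $r$, and the mirrored row operation only touches row $j\ge 3$, so $r$ survives and contradicts minimality. But for an offending entry $p_{ij}$ with $i,j\ge 3$ there is no single coupled operation ``producing a new skew-symmetric polynomial whose entry at $(i,j)$ is $r$'', because row $i$ contains no copy of $g_1$ to divide by. The standard repair is a two-stage argument: first use the row-$1$/row-$2$ divisibility to clear rows and columns $1,2$ down to $\left[\begin{smallmatrix}0&g_1\\-g_1&0\end{smallmatrix}\right]\oplus P_1(\lambda)$; then, if some entry $p_{ij}$ of $P_1$ were not divisible by $g_1$, apply the coupled operation that adds row $i$ to row $1$ and column $i$ to column $1$ (the $(1,2)$ entry stays $g_1$ because those rows were just cleared, and the new $(1,j)$ entry becomes $p_{ij}$), and only then reduce modulo $g_1$ to contradict minimality. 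You flag exactly this as ``the delicate point'' but do not carry it out; once this detour is written down, the existence proof is complete and the induction closes as you describe, including $g_1\mid g_2$ since every entry of any polynomial congruent to $P_1$ is a $\mathbb C[\lambda]$-combination of entries of $P_1$.
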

Similarly to the unimodular congruence, two matrix polynomials $P(\lambda)$ and $Q(\lambda)$ are called {\it unimodularly equivalent} if $U(\lambda) P(\lambda) V(\lambda)=Q(\lambda)$ for some unimodular matrix polynomials \ $U(\lambda)$ \ and \ $V(\lambda)$ \ (i.e. $\det U(\lambda), \det V(\lambda) \in \mathbb C \backslash \{0\}$). Notably, the canonical form in Theorem \ref{tsmiths} is the skew-symmetric version of the well-known Smith form for matrix polynomials under unimodular equivalence \cite{Gant59,MMMM13}.

Note that the (normal) rank of the skew-symmetric matrix polynomial $P(\lambda)$ in Theorem \ref{tsmiths} is equal to the nonnegative integer $2r$, i.e., it is always an even number. The monic scalar polynomials $g_1(\lambda),\dots,g_r(\lambda)$ in Theorem \ref{tsmiths} are called the {\it invariant polynomials} of $P(\lambda)$, and, for any $\alpha \in \mathbb{C}$, each of them can be uniquely factored as
$$
g_j(\lambda) = (\lambda - \alpha)^{\sigma_j} p_j (\lambda), \quad \mbox{with $p_j(\alpha) \ne 0$}
$$
and $\sigma_j \ge 0$ { being} an integer. The sequence $0 \leq \sigma_1 = \sigma_1 \leq \sigma_2 = \sigma_2 \leq \cdots \leq \sigma_r = \sigma_r$ is called {\it the sequence of partial multiplicities} of $P(\lambda)$ at $\alpha$. The number $\alpha$ is a finite eigenvalue of $P(\lambda)$ if the partial multiplicity sequence at $\alpha$ contains at least two nonzero terms, or, equivalently, if $\alpha$ is a root of at least one invariant polynomial $g_j (\lambda)$. The {\it elementary divisors} of $P(\lambda)$ associated with a finite eigenvalue $\alpha$ is the collection of factors $(\lambda - \alpha)^{\sigma_1},(\lambda - \alpha)^{\sigma_1}, (\lambda - \alpha)^{\sigma_2},(\lambda - \alpha)^{\sigma_2}, \ldots , (\lambda - \alpha)^{\sigma_r}, (\lambda - \alpha)^{\sigma_r}$ for which $\sigma_j > 0$.

Let $P(\lambda) \in \POL^{ss}_{d,m\times m}$. Then, the sequence of partial multiplicities of $P(\lambda)$ at infinity is defined to be the sequence of partial multiplicities of the matrix polynomial $\rev_d P(\lambda):= \lambda^d P(1/\lambda)$ at zero. Moreover, if zero is an eigenvalue of $\rev_d  P(\lambda)$, then we say that $\lambda = \infty$ is an eigenvalue of the matrix polynomial $P(\lambda) \in \POL^{ss}_{d,m\times m}$. The elementary divisors for the zero eigenvalue of $\rev_d P(\lambda)$ are the elementary divisors associated with the infinite eigenvalue of $P(\lambda)$. We emphasize that the sequence of partial multiplicities of $P(\lambda)$ at infinity, as well as the fact that $\lambda = \infty$ is an eigenvalue of $P(\lambda)$, depend on the grade $d$ chosen for $P(\lambda)$. This observation plays a key role in the results of this paper.

Theorem \ref{gammath} connects the smallest partial multiplicity at $\infty$ with the grade and the degree of the matrix polynomial and with the fact of the leading coefficient being equal to zero. It is a consequence of \cite[Lemma 2.6]{ADHM23}. The complete proof can be found in \cite[Lemma 2.7]{ADHMarXiv}. Since Theorem \ref{gammath} is important in this paper we sketch the simple proof for the sake of completeness.
\begin{theorem}
\label{gammath}
Let $P(\lambda) = \sum_{i=0}^d \lambda^i P_i \in \POL_{d,m \times m}^{ss}$ with $\rank P=2r, \ r>0,$ and sequence of partial multiplicities at $\infty$ equal to $0 \leq \gamma_1=\gamma_1 \leq \gamma_2 = \gamma_2 \leq \dots \leq \gamma_r = \gamma_r$. Then, 
\begin{itemize}
    \item[\rm (a)] $\gamma_1 = d - \deg(P)$, and
    \item[\rm (b)] $P_d=0$ if and only if $1 \leq \gamma_1=\gamma_1 \leq \gamma_2 = \gamma_2 \leq \dots \leq \gamma_r = \gamma_r$.
\end{itemize}
\end{theorem}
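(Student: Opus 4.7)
Observe first that part (b) is an immediate corollary of (a): by definition of $\deg(P)$ one has $P_d = 0 \iff \deg(P) < d \iff d - \deg(P) \geq 1$, and (a) equates the last quantity with $\gamma_1$. So the whole content lies in proving (a).

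For (a) I would set $k := \deg(P)$ (so $P_k \neq 0$ and $P_i = 0$ for $i > k$) and compute directly:
$$\rev_d P(\lambda) = \sum_{i=0}^{d} \lambda^{d-i} P_i = \lambda^{d-k} \sum_{i=0}^{k} \lambda^{k-i} P_i = \lambda^{d-k}\, \rev_k P(\lambda).$$
Multiplying a matrix polynomial entrywise by the scalar $\lambda^{d-k}$ is a unimodular-equivalence-preserving operation that simply scales each invariant in the Smith form of $\rev_k P$ by $\lambda^{d-k}$. Consequently, the sequence of partial multiplicities at $\lambda = 0$ of $\rev_d P$ is obtained from that of $\rev_k P$ by adding $d - k$ to each term.

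The remaining step, which I view as the only nontrivial point, is to show that the smallest partial multiplicity at $0$ of $\rev_k P$ is $0$. This is because the first invariant polynomial of $\rev_k P$ equals the gcd of all its entries, while the constant coefficient of $\rev_k P(\lambda)$ is precisely $P_k$, which is nonzero by definition of $k$. Hence at least one entry of $\rev_k P$ is nonzero at $\lambda = 0$, so the first invariant polynomial does not vanish at $0$, and the smallest partial multiplicity there equals $0$. Combining this with the scaling observed above, and recalling that by definition the sequence of partial multiplicities of $P$ at $\infty$ coincides with that of $\rev_d P$ at $0$, we obtain $\gamma_1 = d - k = d - \deg(P)$, completing (a). No essential obstacle is anticipated: the argument is entirely bookkeeping about how a global scalar factor $\lambda^{d-k}$ interacts with the Smith form.
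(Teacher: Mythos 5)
Your proposal is correct and follows essentially the same route as the paper's proof: reduce (b) to (a), factor $\rev_d P = \lambda^{d-\deg(P)}\rev_{\deg(P)}P$, observe that this shifts all partial multiplicities at $0$ by $d-\deg(P)$, and use $(\rev_{\deg(P)}P)(0)=P_{\deg(P)}\neq 0$ to conclude the smallest one for $\rev_{\deg(P)}P$ is zero. Your extra remark that the first invariant polynomial is the gcd of the entries just makes explicit the last step, which the paper leaves implicit.
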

\begin{proof} Observe that $P_d = 0$ if and only if $d > \deg(P)$. Therefore part (b) follows from part (a), so we only need to prove part (a). Note that $\rev_d P(\lambda) = \lambda^{d-\deg (P)} \rev_{\deg(P)} P(\lambda)$. Therefore, the (skew-symmetric) Smith form of $\rev_d P(\lambda)$ is the one of $\rev_{\deg(P)} P(\lambda)$ multiplied by $\lambda^{d-\deg (P)}$, which implies that { the smallest partial multiplicity of $P$ at $\infty$ is} $\gamma_1 = d-\deg (P) + \widetilde{\gamma}_1$, where $\widetilde{\gamma}_1$ is the smallest partial multiplicity at $0$ of $\rev_{\deg(P)} P(\lambda)$. The fact that $(\rev_{\deg(P)} P) (0) = P_{\deg(P)} \ne 0$, implies that $\widetilde{\gamma}_1 = 0$ and the result is proved.
\end{proof}

For an $m\times n$ matrix polynomial $P(\lambda)$, the left and right null-spaces, over the field of rational functions $\mathbb C(\lambda)$, are defined as follows:
\begin{align*}
{\cal N}_{\rm left}(P)&:= \{y(\lambda)^T \in \mathbb C(\lambda)^{1 \times m}: y(\lambda)^TP(\lambda) = 0_{1\times n} \}, \\
{\cal N}_{\rm right}(P)&:= \{x(\lambda) \in \mathbb C(\lambda)^{n\times 1}: P(\lambda)x(\lambda) = 0_{m\times 1}\}.
\end{align*}
Each subspace ${\cal V}$ of $\mathbb C(\lambda)^n$ has bases consisting entirely of vector polynomials.
A basis of ${\cal V}$ consisting of vector polynomials whose sum of degrees is minimal among all bases of ${\cal V}$ consisting of vector polynomials is called a {\it minimal basis} of ${\cal V}$. The ordered list of degrees of the vector polynomials in any minimal basis of ${\cal V}$ is always the same. These degrees are called the minimal indices of ${\cal V}$ \cite{Forn75,Kail80}. { This allows us to define the left and right {\it minimal indices} of a matrix polynomial $P(\lambda)$ \label{mi} as those of ${\cal N}_{\rm left}(P)$ and ${\cal N}_{\rm right}(P)$, respectively. Note that for a skew-symmetric matrix polynomial the left minimal indices are equal to the right ones.}

We define the {\it complete eigenstructure} of a skew-symmetric matrix polynomial { $P(\lambda) \in \POL^{ss}_{d, m\times m}$ {\it of grade $d$}} to be the collection of all finite and infinite eigenvalues, the corresponding elementary divisors { (or, equivalently, the corresponding sequences of partial multiplicities)}, and the left and right minimal indices of $P(\lambda)$. To refer concisely to the { subset of $\POL^{ss}_{d, m\times m}$ of skew-symmetric matrix polynomials with the same size $m\times m$, with the same grade $d$,} and with the same complete eigenstructure, we define the notion of orbit of a skew-symmetric matrix polynomial. Notably, an analogous definition for general polynomials is given in \cite{DmDo17}. { 
\begin{definition} \label{orbdef} Let $P(\lambda) \in \POL^{ss}_{d, m\times m}$. The subset of matrix polynomials in $\POL^{ss}_{d, m\times m}$ with the same complete eigenstructure as $P$ is called the \emph{orbit} of $P$, denoted $\orb(P)$.
\end{definition}
Observe that Theorem \ref{gammath}-(a) implies that all the polynomials in $\orb (P)$ have the same {\it degree}. Moreover, they also have the same rank, since the rank is determined by the size $m\times m$ and the number of left (or right) minimal indices}. Note that, by contrast to congruence orbits of skew-symmetric matrix pencils, Definition~\ref{orbdef} is not associated with an action of any group.

{
\subsection{Orbits of the same skew-symmetric matrix polynomial viewed with different grades} \label{subsec.cdd1} 

The proofs of the main results in this paper rely on comparing the properties of a skew-symmetric matrix polynomial $P(\lambda) \in \POL^{ss}_{d, m \times m}$ {\it of grade $d$} with the entry-wise identical polynomial $\widetilde{P} (\lambda) = \lambda^{d+1} \, 0 + P(\lambda) \in \POL^{ss}_{d+1, m \times m}$ {\it of grade $d+1$}, as well as on comparing the orbits, and their closures, of these two matrix polynomials. Lemma \ref{pori} establishes these comparisons. Recall in the statement of this lemma that $\orb(P) \subseteq \overline{\orb} (P) \subseteq \POL^{ss}_{d, m \times m}$ and that
$\orb(\widetilde{P}) \subseteq \overline{\orb} (\widetilde{P}) \subseteq \POL^{ss}_{d+1, m \times m}$.
\begin{lemma} \label{pori} Let $P(\lambda) \in \POL^{ss}_{d, m \times m}$ have rank $2r, \ r>0,$ and let $\widetilde{P} (\lambda) := \lambda^{d+1} \, 0 + P(\lambda) \in \POL^{ss}_{d+1, m \times m}$. Then:
\begin{itemize}
    \item[\rm (a)] $\rank P = \rank \widetilde{P} = 2r$.
    \item[\rm (b)] $P(\lambda)$ and $\widetilde{P} (\lambda)$ have the same finite eigenvalues, with the same sequences of partial multiplicities, or, equivalently, with the same elementary divisors.
    \item[\rm (c)] $P(\lambda)$ and $\widetilde{P} (\lambda)$ have the same left and right minimal indices.
    \item[\rm (d)]  $\gamma_1=\gamma_1 \leq \gamma_2 = \gamma_2 \leq \dots \leq \gamma_r = \gamma_r$ is the sequence of partial multiplicities at $\infty$ of $P(\lambda)$ if and only if
    $\gamma_1 +1=\gamma_1 +1\leq \gamma_2 +1= \gamma_2 +1 \leq \dots \leq \gamma_r +1 = \gamma_r +1$ is the sequence of partial multiplicities at $\infty$ of $\widetilde{P}(\lambda)$.
    \item[\rm (e)] If $\widetilde{Q} (\lambda) = \sum_{i=0}^{d+1} \lambda^i Q_i \in \orb (\widetilde{P})$, then $Q_{d+1} = 0$.
     \item[\rm (f)] If $\widetilde{Q} (\lambda) = \sum_{i=0}^{d+1} \lambda^i Q_i \in \overline{\orb} (\widetilde{P})$, then $Q_{d+1} = 0$.
     \item[\rm (g)] Let $\widetilde{Q}(\lambda) = \lambda^{d+1} \, 0 + Q(\lambda) \in \POL^{ss}_{d+1, m \times m}$, where $Q(\lambda) \in \POL^{ss}_{d, m \times m}$. Then,
     \begin{enumerate}
     \item $\widetilde{Q} \in \orb (\widetilde{P})$ if and only if $Q \in \orb (P)$, 
     \item $\widetilde{Q} \in \overline{\orb} (\widetilde{P})$ if and only if $Q \in \overline{\orb} (P)$, and 
     \item $\orb(\widetilde{Q}) \subseteq \overline{\orb} (\widetilde{P})$ if and only if $\orb (Q) \subseteq \overline{\orb} (P)$.
     \end{enumerate}
\end{itemize}
\end{lemma}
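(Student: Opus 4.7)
The strategy rests on the elementary observation that $P(\lambda)$ and $\widetilde{P}(\lambda)$ are literally the same element of $\mathbb C[\lambda]^{m\times m}$, differing only in the declared grade. From this, parts (a), (b), and (c) follow at once: the rank, the skew-symmetric Smith form (hence the invariant polynomials, the finite eigenvalues, their sequences of partial multiplicities, and the corresponding elementary divisors), and the left and right rational null-spaces (hence the minimal indices) are all intrinsic to the matrix of polynomial entries and are independent of the chosen grade. For (d), I would use
\[
\rev_{d+1}\widetilde{P}(\lambda)=\lambda^{d+1}\widetilde{P}(1/\lambda)=\lambda\cdot\lambda^{d}P(1/\lambda)=\lambda\cdot\rev_{d}P(\lambda),
\]
together with Theorem \ref{tsmiths}: multiplying the skew-symmetric Smith form of $\rev_{d}P$ by $\lambda$ yields again a valid skew-symmetric Smith form (monicity and divisibility are preserved), so the invariant polynomials of $\rev_{d+1}\widetilde{P}$ are obtained from those of $\rev_{d}P$ by multiplying each by $\lambda$. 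This shifts every partial multiplicity at $0$ by $+1$, which is precisely the asserted shift of the partial multiplicities at $\infty$.

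For (e), I would combine (d) applied to $\widetilde{P}$ itself with Theorem \ref{gammath}(b): the partial multiplicities at $\infty$ of $\widetilde{P}$ viewed in grade $d+1$ are the numbers $\gamma_j+1\ge 1$, any $\widetilde{Q}\in\orb(\widetilde{P})$ inherits the same sequence, and Theorem \ref{gammath}(b) applied to $\widetilde{Q}$ of grade $d+1$ then forces $Q_{d+1}=0$. Part (f) follows by continuity of the ``leading coefficient'' map on $\POL^{ss}_{d+1,m\times m}$: the condition $R_{d+1}=0$ is a closed linear condition that holds throughout $\orb(\widetilde{P})$ by (e), and therefore propagates to $\overline{\orb}(\widetilde{P})$.

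For (g), I would exploit the isometric linear embedding $\iota:\POL^{ss}_{d,m\times m}\hookrightarrow\POL^{ss}_{d+1,m\times m}$, $\iota(R):=\lambda^{d+1}\cdot 0+R$, whose image is the closed subspace of grade-$(d+1)$ polynomials with vanishing leading coefficient and which contains $\overline{\orb}(\widetilde{P})$ by (f). Item (g.1) is immediate from (a)--(d): the complete eigenstructure of $Q$ matches that of $P$ in grade $d$ if and only if the complete eigenstructure of $\iota(Q)=\widetilde{Q}$ matches that of $\iota(P)=\widetilde{P}$ in grade $d+1$, because all features except the partial multiplicities at $\infty$ are grade-independent and the latter are shifted by the same amount on both sides. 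For (g.2), I would take any approximating sequence in $\orb(\widetilde{P})$, observe via (e) that it lies in $\iota(\POL^{ss}_{d,m\times m})$ and via (g.1) that it is $\iota$ of a sequence in $\orb(P)$, and use the isometric nature of $\iota$ to transport convergence in both directions. Finally, (g.3) follows by applying (e) with $\widetilde{Q}$ in place of $\widetilde{P}$ to conclude $\orb(\widetilde{Q})\subseteq\iota(\POL^{ss}_{d,m\times m})$, so that both sides of (g.3) can be converted by combining (g.1) and (g.2); the edge case $Q=0$ is harmless since then $\widetilde{Q}=0$ lies automatically in $\iota(\POL^{ss}_{d,m\times m})$.

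The main obstacle will be the topological bookkeeping in (g): because the orbits here are defined by ``same complete eigenstructure'' and are not orbits of a continuous group action, one cannot appeal to standard facts about group-orbit closures, and the whole argument must be routed through the embedding $\iota$ and the rigidity imposed by (e) and (f).
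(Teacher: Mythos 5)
Your proposal is correct and follows essentially the same route as the paper's proof: grade-independence of the Smith form and null spaces for (a)--(c), the identity $\rev_{d+1}\widetilde{P}=\lambda\,\rev_d P$ for (d), Theorem \ref{gammath} plus constancy of the eigenstructure on orbits for (e), passage to limits for (f), and the transfer of (b)--(e) through the map $Q\mapsto\lambda^{d+1}0+Q$ for (g). The only cosmetic differences are that you invoke Theorem \ref{gammath}(b) where the paper uses part (a) (these are equivalent here), and that you package the transfer as an explicit isometric embedding $\iota$, which is exactly what the paper does implicitly.
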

\begin{proof}
Parts (a), (b) and (c) are trivial because the rank, the (skew-symmetric) Smith form and the left and right rational null spaces of a matrix polynomial do not depend on the grade chosen for that polynomial.

Part (d) follows from the fact that $\rev_{d+1} \widetilde{P} (\lambda) = \lambda \rev_{d} P (\lambda)$. Therefore, the (skew-symmetric) Smith form of $\rev_{d+1} \widetilde{P} (\lambda)$ is equal to $\lambda$ times the Smith form of $\rev_{d} P (\lambda)$, which implies the result. 

Part (e). Since all the matrix polynomials in $\orb(\widetilde{P}) \subseteq \POL^{ss}_{d+1, m \times m}$ have the same complete eigenstructure, all of them have the same sequence of partial multiplicities at $\infty$. Then, Theorem \ref{gammath}-(a) implies that all of them have the same degree. So, $Q_{d+1} = 0$ because the coefficient of $\lambda^{d+1}$ in $\widetilde{P}$ is $P_{d+1} =0$.

Part (f).  $\widetilde{Q} (\lambda) = \sum_{i=0}^{d+1} \lambda^i Q_i \in \overline{\orb} (\widetilde{P})$ if and only if  $\widetilde{Q}$ is the limit of a sequence of matrix polynomials in $\orb (\widetilde{P})$. But, by (e), all the terms in this sequence have the coefficient of $\lambda^{d+1}$ equal to zero. So, the limit of the sequence have also the coefficient of $\lambda^{d+1}$ equal to zero.

Part (g)-1 follows from the definition of orbit and from applying (b), (c), and (d) to $P$ and $\widetilde{P}$ and to $Q$ and $\widetilde{Q}$.

Part (g)-2. If $\widetilde{Q} \in \overline{\orb} (\widetilde{P})$, then $\widetilde{Q} = \lim_{k\rightarrow \infty} \widetilde{Q}^{(k)}$, for a sequence $\{\widetilde{Q}^{(k)}\}_{k \in \mathbb{N}}$ whose terms belong to $\orb (\widetilde{P}) \subset \POL^{ss}_{d+1, m \times m}$. Taking into account (e), we have that 
$\widetilde{Q}^{(k)}(\lambda) = \lambda^{d+1} \, 0 + Q^{(k)}(\lambda)$, where $\{Q^{(k)}\}_{k \in \mathbb{N}} \subset \POL^{ss}_{d, m \times m}$, and, taking into account (g)-1, $\{Q^{(k)}\}_{k \in \mathbb{N}} \subset \orb (P)$. Thus, $Q = \lim_{k\rightarrow \infty} Q^{(k)}$, which implies that $Q \in \overline{\orb} (P)$.

Conversely, if $Q \in \overline{\orb} (P)$, then  $Q = \lim_{k\rightarrow \infty} Q^{(k)}$, for a sequence $\{Q^{(k)}\}_{k \in \mathbb{N}}$ whose terms belong to $\orb (P) \subset \POL^{ss}_{d, m \times m}$. By part (g)-1, the sequence $\{\lambda^{d+1} \, 0 + Q^{(k)}\}_{k \in \mathbb{N}}$ is included in $\orb (\widetilde{P}) \subset \POL^{ss}_{d+1, m \times m}$. Moreover, $\lim_{k \rightarrow \infty} \lambda^{d+1} \, 0 + Q^{(k)} = \widetilde{Q}$, which implies that $\widetilde{Q} \in \overline{\orb} (\widetilde{P})$.

{ Finally, for part (g)-3, let us first assume that $\orb(\widetilde Q)\subseteq\overline\orb(\widetilde P).$ Let $Q'\in\orb(Q)$. We want to prove that $Q'\in\overline \orb (P)$. Since $Q'\in\orb(Q)$, by part (g)-1 we conclude that $\widetilde Q':=\lambda^{d+1}0+Q'\in\orb(\widetilde Q)$, and then $\widetilde Q'\in\overline\orb(\widetilde P)$, by hypothesis. But now part (g)-2 implies $Q'\in\overline\orb(P)$.

Conversely, let us assume that $\orb(Q)\subseteq\overline\orb(P)$, and let $\widetilde Q'\in\orb(\widetilde Q)$. We want to prove that $\widetilde Q'\in\overline\orb(\widetilde P)$. By part (e) we know that $\widetilde Q'=\lambda^{d+1}0+Q'$, for some $Q'\in{\rm POL}^{ss}_{d,m\times m}$, so part (g)-1 together with the hypothesis $\widetilde Q'\in\orb(\widetilde Q)$ imply that $Q'\in\orb(Q)$. But this in turn implies $Q'\in\overline\orb(P)$, by hypothesis, so  part (g)-2 gives $\widetilde Q'\in\overline\orb(\widetilde P)$, as wanted.
}
\end{proof}
}

\subsection{Linearization of skew-symmetric matrix polynomials and their perturbations}

A matrix pencil ${\cal F}_{P}$ is called a {\it linearization} of a matrix polynomial $P(\lambda)$ { of grade $d$ if ${\cal F}_P$ has the same finite eigenvalues and associated elementary divisors}, the same number of left minimal indices, and the same number of right minimal indices as $P$ \cite[Thm. 4.1]{DeDM14}. If in addition, { $\rev_1 {\cal F}_{P}$ is a linearization of $\rev_d P$} then ${\cal F}_{P}$ is called a {\it strong linearization} of $P$ and, then, ${\cal F}_{P}$ and $P$ have also the same infinite elementary divisors.

The following pencil-template is known to be a skew-symmetric strong linearization of skew-symmetric  matrix polynomials $P(\lambda) = \sum_{i=0}^d \lambda^i A_i \in \POL^{ss}_{d, m\times m}$ of odd grade $d$ \cite{MMMM13}, see also \cite{AnVo04,MMMM10}:
\begin{align*}
{   {\cal F}}_{P}(i,i)&=\begin{cases}
\lambda A_{d-i+1} + A_{d-i} & \text{if } i \text{ is odd,}\\
0 & \text{if } i \text{ is even,}\\
\end{cases}\\
{   {\cal F}}_{P}(i,i+1)&=\begin{cases}
- I_m & \text{if } i \text{ is odd,}\\
- \lambda I_m & \text{if } i \text{ is even,}\\
\end{cases} \quad
{   {\cal F}}_{P}(i+1,i)=\begin{cases}
I_m & \text{if } i \text{ is odd,}\\
\lambda I_m & \text{if } i \text{ is even,}\\
\end{cases}
\end{align*}
where ${\cal F}_{P}(j,k)$ denotes an $m \times m$ matrix pencil which is at the position $(j,k)$ of the block pencil ${\cal F}_{P}$ and  $j,k=1, \dots , d$. The blocks of ${\cal F}_{P}$ in positions which are not specified above are zero. We rewrite this strong linearization template in  a matrix form as follows:
\begin{equation}
\label{linform}
{\cal F}_{P}(\lambda)=
\lambda
\begin{bmatrix}
A_d&&&&&\\
&\ddots&\ddots&&&\\
&\ddots&0&-I&&\\
&&I&A_{3}&&\\
&&&&0&-I\\
&&&&I&A_1\\
\end{bmatrix} - \begin{bmatrix}
-A_{d-1}&I&&&&\\
-I&0&\ddots&&&\\
&\ddots&\ddots&&&\\
&&&-A_{2}&I&\\
&&&-I&0&\\
&&&&&-A_0\\
\end{bmatrix}.
\end{equation}
Note that the linearization ${   {\cal F}}_{P}$ is defined { only} for skew-symmetric matrix polynomials of odd grade. The reason is that there is no skew-symmetric linearization-template (i.e., a skew-symmetric companion form in the language of \cite[Sects. 5 and 7]{DeDM14}) for skew-symmetric matrix polynomials of even grade \cite{DeDM14,MMMM13}.

{ For any skew-symmetric matrix polynomial $P(\lambda) \in \POL^{ss}_{d, m\times m}$, the strong linearization \eqref{linform}} preserves the finite and infinite elementary divisors of $P(\lambda)$ but does not preserve the left and right minimal indices of $P(\lambda)$. Nevertheless, the relations between the minimal indices of a skew-symmetric matrix polynomial $P(\lambda)$ and its linearization \eqref{linform} are derived in \cite{Dmyt15}, see also \cite{DeDM10}. We recall them in Theorem \ref{skewlin}.

\begin{theorem}{\rm \cite{Dmyt15}} \label{skewlin}
Let $P(\lambda)$ be a skew-symmetric $m\times m$ matrix polynomial of odd grade $d \ge 3$, and let ${\cal F}_{P}$ be its { strong} linearization \eqref{linform} given above. If $0 \le  \varepsilon_1 \le \varepsilon_2 \le~\dots \le \varepsilon_t$ are the right (left) minimal indices of $P$ then
$$0 \le  \varepsilon_1 + \frac{1}{2}(d-1) \le \varepsilon_2 + \frac{1}{2}(d-1) \le  \cdots \le \varepsilon_t + \frac{1}{2}(d-1)$$
are the right (left) minimal indices of ${   {\cal F}}_{P}$.
\end{theorem}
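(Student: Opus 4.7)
The plan is to build an explicit minimal basis of ${\cal N}_{\rm right}({\cal F}_P)$ from any minimal basis of ${\cal N}_{\rm right}(P)$, track the degree shift it produces, and then invoke the skew-symmetry of both $P$ and ${\cal F}_P$ to transfer the conclusion to the left minimal indices.

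For the construction, write a candidate null vector in block form $u=(u_1,\dots,u_d)^T$, with each $u_i$ of size $m\times 1$, and split ${\cal F}_P u=0$ into two families of equations. The even-indexed rows are exactly $u_{i-1}=\lambda u_{i+1}$, which, solved in terms of $u_d=:v$, yield the closed form $u_{2k-1}=\lambda^{(d-2k+1)/2}v$ for $k=1,\dots,(d+1)/2$. The odd-indexed rows then determine the remaining components from the top down: the first row gives $u_2=(\lambda A_d+A_{d-1})u_1$, and for $k\ge 2$ row $2k-1$ gives $u_{2k}=\lambda u_{2k-2}+(\lambda A_{d-2k+2}+A_{d-2k+1})u_{2k-1}$. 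Telescoping produces
\[
u_{2k}=\sum_{\ell=d-2k+1}^{d}\lambda^{\ell-(d-2k+1)/2}\,A_\ell\, v,\qquad k=1,\dots,(d-1)/2,
\]
while the last odd-indexed row collapses to $P(\lambda)v(\lambda)=0$. Hence $v\mapsto u$ is a bijection from ${\cal N}_{\rm right}(P)$ onto ${\cal N}_{\rm right}({\cal F}_P)$.

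For the degree analysis, set $\varepsilon:=\deg(v)$. The odd-block components satisfy $\deg(u_{2k-1})=\varepsilon+(d-2k+1)/2$ exactly. For the even-block components, the coefficient of $\lambda^m$ in $u_{2k}$ equals $\sum_{\ell=d-2k+1}^{d}A_\ell v_{n-\ell}$ with $n:=m+(d-2k+1)/2$, and comparing with $[Pv]_n=\sum_{\ell=0}^{d}A_\ell v_{n-\ell}=0$ shows that the missing terms $\sum_{\ell=0}^{d-2k}A_\ell v_{n-\ell}$ vanish whenever $n-(d-2k)>\varepsilon$, i.e., whenever $m>\varepsilon+(d-2k-1)/2$. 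This yields $\deg(u_{2k})\le\varepsilon+(d-2k-1)/2<\varepsilon+(d-1)/2$, so the overall degree is attained in the first block: $\deg(u)=\varepsilon+(d-1)/2$.

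For minimality, take a minimal basis $\{v_1,\dots,v_t\}$ of ${\cal N}_{\rm right}(P)$ with degrees $\varepsilon_1\le\dots\le\varepsilon_t$ and its lift $\{u_1,\dots,u_t\}$; by the bijection and the degree computation, each $u_i$ has degree $\varepsilon_i+(d-1)/2$, and its top-degree coefficient is nonzero only in the first block, where it coincides with the leading coefficient of $v_i$. The leading column-coefficient matrix of $\{u_1,\dots,u_t\}$ therefore has the leading column-coefficient matrix of $\{v_1,\dots,v_t\}$ as its top block-row and zeros below, so it has full column rank $t$; hence $\{u_1,\dots,u_t\}$ is column-reduced, i.e., a minimal basis. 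The analogous statement for the left minimal indices follows because $P^T=-P$ and ${\cal F}_P^T=-{\cal F}_P$ force left and right minimal indices to coincide for both pencils. The most delicate step, and the one where I expect the main difficulty, is the degree bookkeeping in the even blocks: the precise vanishing pattern of $Pv=0$ must be exploited simultaneously in all $(d-1)/2$ even blocks to cancel the apparent high-degree contributions and reveal the clean shift $(d-1)/2$.
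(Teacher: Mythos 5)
The paper does not prove this statement: it is quoted directly from \cite{Dmyt15} (see also \cite{DeDM10}), so there is no internal proof to compare against, and your self-contained argument has to be judged on its own. The core of it is correct. The block equations you extract from ${\cal F}_P u=0$ are the right ones, the telescoped formulas $u_{2k-1}=\lambda^{(d-2k+1)/2}v$ and $u_{2k}=\sum_{\ell=d-2k+1}^{d}\lambda^{\ell-(d-2k+1)/2}A_\ell v$ check out by induction, the last block row does collapse to $P(\lambda)v=0$, and the cancellation argument using the coefficients of $Pv=0$ correctly caps $\deg(u_{2k})$ at $\varepsilon+(d-2k-1)/2$, so the degree of the lifted vector is attained only in the first block and equals $\varepsilon+(d-1)/2$. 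The reduction of the left indices to the right ones via skew-symmetry of both $P$ and ${\cal F}_P$ is also fine.

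The one step that is stated too loosely is the final ``column-reduced, i.e., a minimal basis.'' Column-reducedness alone does not characterize minimal bases: by Forney's criterion \cite{Forn75}, a polynomial basis $U(\lambda)$ of a rational subspace is minimal if and only if its highest-column-degree coefficient matrix has full column rank \emph{and} $U(\lambda_0)$ has full column rank for every $\lambda_0\in\mathbb C$ (the single column $(\lambda,0)^T$ is column reduced but is not a minimal basis of the space it spans). You verify only the first condition. Fortunately the second is immediate in your construction: the $d$-th block row of $U(\lambda)=[u_1,\dots,u_t]$ is exactly $V(\lambda)=[v_1,\dots,v_t]$, which has full column rank at every $\lambda_0$ because $\{v_1,\dots,v_t\}$ is a minimal basis of ${\cal N}_{\rm right}(P)$; hence so does $U(\lambda_0)$. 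Adding that one sentence closes the gap and makes the proof complete.
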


The linearization ${\cal F}_{P}$ in \eqref{linform} is crucial for obtaining the results in Section~\ref{sec.main}. Thus we define the {\it generalized Sylvester space} consisting of the linearizations ${\cal F}_{P}$ of all $m \times m$ skew-symmetric matrix polynomials of odd grade $d$, namely:
\[ { 
\GSYL^{ss}_{d,m\times m}= \{ {\cal F}_{P}  \ : P(\lambda) \in \POL^{ss}_{d, m\times m} \; \text{with odd} \; d \}.}
\]
If there is no risk of confusion we will write $\GSYL$ instead of $\GSYL^{ss}_{d, m\times m}$. {   Given ${\cal F}_{P} = \lambda A - B$ and ${\cal F}_{P'} = \lambda A' - B'$, the function $d({\cal F}_{P},{\cal F}_{P'}):= \left( ||A-A'||_F^2 + ||B-B'||_F^2 \right)^{\frac{1}{2}}$} mentioned in Sections \ref{sec.main1} and \ref{sect.prempolys} is a distance on $\GSYL$ and it makes $\GSYL$ a metric space.  Since $d({   {\cal F}}_{P}, {   {\cal F}}_{P'}) = d(P,P')$, there is a bijective isometry (and therefore a homeomorphism):
$$
f: \POL^{ss}_{d, m\times m} \rightarrow \GSYL^{ss}_{d, m\times m} \quad \text{such that} \quad {   f(P) = {\cal F}_{P}}.
$$
We also define the {\em orbit} of the skew-symmetric linearizations of type \eqref{linform} of a fixed skew-symmetric matrix polynomial { $P\in \POL^{ss}_{d, m\times m}$ of odd grade $d$ as}
\begin{equation}\label{linorb}
\orb({\cal F}_{P}) = \{(S^{T}   {\cal F}_{P} S) \in \GSYL^{ss}_{d, m\times m} \ : \ S \in  GL_{md}(\mathbb C) \}.
\end{equation}
We emphasize that all the elements of $\orb({   {\cal F}}_{P})$ have the block structure of the elements of $\GSYL^{ss}_{d, m\times m}$.
Thus, in particular, $\orb(P) = f^{-1}(\orb({   {\cal F}}_{P}))$, as a consequence of the properties of strong linearizations and Theorem \ref{skewlin}, and $\overline{\orb}(P) = f^{-1}(\overline{\orb}({   {\cal F}}_{P}))$, as a consequence of $f$ being a homeomorphism. Moreover, we also have that, for any $m \times m$ skew-symmetric matrix polynomials $P, Q$ of odd grade $d$, $\overline{\orb}(P) \supseteq \overline{\orb}(Q)$ if and only if $\overline{\orb}({   {\cal F}}_{P}) \supseteq \overline{\orb}({   {\cal F}}_{Q})$, where it is essential to note that the closures are taken in the metric spaces $\POL$ and $\GSYL$, respectively, defined above.  Similarly to the unstructured matrix pencil case, $\orb({{\cal F}}_{P})$ is open in its closure in the relative Euclidean topology, and so is $\orb(P)$ since $f$ is a homeomorphism.

\section{Generic skew-symmetric matrix polynomials with bounded rank and fixed grade}
\label{sec.main}

In this section we present the complete eigenstructure of the generic $m\times m$ skew-symmetric matrix polynomials of rank at most $2r$ and even grade $d$ in Theorem \ref{mainth}. Since the case of odd grade is known, we also provide a result that does not depend on the parity of $d$ { in Theorem \ref{anydth}}. We start by proving the following auxiliary lemma.

{
\begin{lemma}\label{srank}
Let $d, m$, and $r$ be integers such that $d\geq 1$, $m\geq 2$, and $2 \leq 2 r \leq (m-1)$. For any skew-symmetric matrix polynomial $Q (\lambda) \in \POL^{ss}_{d, m \times m}$, with $\rank Q=2r_1$ and $r_1<r$, there exists a sequence of skew-symmetric matrix polynomials $\{ P^{(k)} (\lambda)\}_{k \in \mathbb{N}} \subset \POL^{ss}_{d, m \times m}$ with $\rank P^{(k)} = 2r$, for all $k \in \mathbb{N}$, such that $\lim_{k \rightarrow \infty} P^{(k)}  = Q$.
\end{lemma}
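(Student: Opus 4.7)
The plan is to construct $P^{(k)}$ as a small \emph{constant} skew-symmetric perturbation of $Q$, namely $P^{(k)}(\lambda):=Q(\lambda)+\varepsilon_k\,E$, where $E\in\mathbb{C}^{m\times m}$ is a fixed constant skew-symmetric matrix of rank $2(r-r_1)$ chosen to be transverse to $Q$, and $\varepsilon_k\in\mathbb{C}\setminus\{0\}$ is any sequence with $\varepsilon_k\to 0$. Because $E$ is constant, $P^{(k)}\in\POL^{ss}_{d,m\times m}$ automatically, and $P^{(k)}\to Q$ in the Frobenius distance is immediate; the only nontrivial point is showing $\rank P^{(k)}=2r$.

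First, I would pick $\lambda_0\in\mathbb{C}$ with $\rank Q(\lambda_0)=2r_1$, using the fact that the normal rank of a matrix polynomial equals the maximum of its pointwise rank and is attained off the finite zero set of a nonzero $2r_1\times 2r_1$ minor of $Q$. Second, I would bring $Q(\lambda_0)$ to a Darboux form: there exists an invertible $S\in\mathbb{C}^{m\times m}$ such that
\[
S^T Q(\lambda_0)\,S \;=\; \Omega_{r_1}\oplus 0_{m-2r_1},
\]
where $\Omega_s$ denotes the direct sum of $s$ copies of the $2\times 2$ block $\bigl[\begin{smallmatrix}0&1\\-1&0\end{smallmatrix}\bigr]$. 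This is the standard canonical form of a constant complex skew-symmetric matrix under congruence.

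The hypothesis $2r\le m-1$ forces $m-2r_1\ge 2(r-r_1)+1$, so there is strictly more room than needed inside the trailing zero block of $S^T Q(\lambda_0)\,S$ to insert a skew-symmetric summand of rank $2(r-r_1)$. Accordingly, I would define
\[
E \;:=\; S^{-T}\bigl(0_{2r_1}\oplus\Omega_{r-r_1}\oplus 0_{m-2r}\bigr)\,S^{-1},
\]
which is a constant skew-symmetric matrix of rank $2(r-r_1)$. Then $P^{(k)}(\lambda):=Q(\lambda)+\varepsilon_k E$ is plainly in $\POL^{ss}_{d,m\times m}$ and converges to $Q$.

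Finally, I would verify that $\rank P^{(k)}=2r$. The upper bound follows from subadditivity of the normal rank, namely $\rank(Q+\varepsilon_k E)\le\rank Q+\rank(\varepsilon_k E)=2r_1+2(r-r_1)=2r$. For the lower bound, I evaluate at $\lambda_0$ and conjugate by $S$ to obtain $S^T P^{(k)}(\lambda_0)\,S = \Omega_{r_1}\oplus\varepsilon_k\Omega_{r-r_1}\oplus 0_{m-2r}$, whose rank is exactly $2r$ whenever $\varepsilon_k\ne 0$; since the normal rank dominates every pointwise rank, this gives $\rank P^{(k)}\ge 2r$. The main delicate point is this choice of $E$: a generic skew-symmetric perturbation could raise the rank by less than $2(r-r_1)$ if its image overlapped the image of $Q(\lambda_0)$. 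The Darboux-coordinate construction is precisely what places the new rank-$2(r-r_1)$ block inside the kernel of $Q(\lambda_0)$, so that the two rank contributions combine cleanly; the hypothesis $2r\le m-1$ is used exactly to ensure that this kernel is wide enough to host such a block.
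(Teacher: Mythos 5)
Your proposal is correct and follows essentially the same route as the paper: choose a point $\lambda_0$ where the pointwise rank of $Q$ equals its normal rank, put the constant skew-symmetric matrix $Q(\lambda_0)$ into a block-canonical form under congruence, place a rank-$2(r-r_1)$ skew-symmetric block in the complementary (kernel) positions to define $E$, and conclude via subadditivity of the rank for the upper bound and evaluation at $\lambda_0$ for the lower bound. The only cosmetic difference is that the paper uses the unitary congruence form $Q(\mu)=U(\bigoplus_i\left[\begin{smallmatrix}0&s_i\\-s_i&0\end{smallmatrix}\right]\oplus 0)U^T$ from Horn--Johnson instead of a general congruence $S$, which changes nothing in the argument.
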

\begin{proof} Let $\mu \in \mathbb{C}$ be such that $\rank Q(\mu) = 2 r_1$, that is, $\mu$ is not an eigenvalue of $Q(\lambda)$. Since the constant matrix $Q(\mu) \in \mathbb{C}^{m\times m}$ is skew symmetric, there exist a unitary matrix $U \in \mathbb{C}^{m\times m}$ and positive real numbers $s_1, \ldots , s_{r_1}$ such that (see \cite[Corollary 2.6.6]{HoJo2013})
\begin{equation*}
Q(\mu)  = U \left(
\begin{bmatrix}
0 & s_1\\
-s_1 & 0
\end{bmatrix} \oplus
\dots
\oplus
\begin{bmatrix}
0 & s_{r_1}\\
-s_{r_1} & 0
\end{bmatrix} \oplus
0_{m-2r_1} \right) U^T .
\end{equation*}
Then, we define the following constant $m\times m$ skew-symmetric matrix
\begin{equation*}
E :=
U \left(
0_{2r_1} \oplus
\begin{bmatrix}
0 & 1\\
-1 & 0
\end{bmatrix} \oplus
\dots
\oplus
\begin{bmatrix}
0 & 1\\
-1 & 0
\end{bmatrix} \oplus
0_{m-2r}\right) U^T,
\end{equation*}
where there are $r-r_1$ blocks equal to $\left[\begin{smallmatrix}
0 & 1\\
-1 & 0
\end{smallmatrix}\right]$. Then, the desired sequence of skew-symmetric matrix polynomials is defined as $P^{(k)}(\lambda):=Q(\lambda)+\frac{1}{k} E$. Note that $\rank P^{(k)} =2r$, because $\rank P^{(k)}\leq \rank Q + \rank (\frac{1}{k} E) = 2 r_1 + 2 r - 2 r_1 = 2 r$ and $\rank P^{(k)} \geq \rank P^{(k)}(\mu) = \rank (Q(\mu) + \frac{1}{k} E) = 2 r$.
\end{proof}
}

We recall the following lemma that reveals,  for  skew-symmetric matrix polynomials $P$ with  odd  grade $d$, a relation between $\overline{\orb}({\cal F}_{P})$, where the closure is taken in $\GSYL^{ss}_{d,m\times m}$, and $\overline{\orb^c}({   {\cal F}}_{P})$, where the closure is taken in $\PEN^{ss}_{n \times n}$, with $n = m d$.

\begin{lemma} {\rm\cite[Lemma 5.1]{DmDo18}}  \label{cl}
Let $P$ be an $m\times m$ skew-symmetric matrix polynomial with odd grade $d$ and ${   {\cal F}}_{P}$ be its linearization  \eqref{linform}. Then $\overline{\orb}({   {\cal F}}_{P})= \overline{\orb^c}({   {\cal F}}_{P}) \cap \GSYL^{ss}_{d,m\times m}$.
\end{lemma}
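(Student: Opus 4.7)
The plan is to establish the two inclusions separately. The inclusion $\overline{\orb}({\cal F}_{P}) \subseteq \overline{\orb^c}({\cal F}_{P}) \cap \GSYL^{ss}_{d,m\times m}$ is the easy one: by the definitions of the two orbits one has $\orb({\cal F}_{P}) \subseteq \orb^c({\cal F}_{P}) \cap \GSYL^{ss}_{d,m\times m}$, and $\GSYL^{ss}_{d,m\times m}$ is a closed affine subspace of $\PEN^{ss}_{md\times md}$ (its defining conditions force certain $m\times m$ blocks of the pencil to be exactly $0$, $\pm I$, $\pm\lambda I$). Hence the closure in $\GSYL^{ss}_{d,m\times m}$ of a subset of $\GSYL^{ss}_{d,m\times m}$ coincides with its closure in $\PEN^{ss}_{md\times md}$, and the inclusion follows at once from the monotonicity of the closure operator.

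For the harder inclusion $\overline{\orb^c}({\cal F}_{P}) \cap \GSYL^{ss}_{d,m\times m} \subseteq \overline{\orb}({\cal F}_{P})$, I would take ${\cal G} \in \overline{\orb^c}({\cal F}_{P}) \cap \GSYL^{ss}_{d,m\times m}$ and, using the homeomorphism $f:\POL^{ss}_{d,m\times m}\to \GSYL^{ss}_{d,m\times m}$, write ${\cal G}={\cal F}_{Q}=f(Q)$ for a unique $Q\in\POL^{ss}_{d,m\times m}$. Because $f$ is an isometry (hence a homeomorphism), the conclusion ${\cal G}\in\overline{\orb}({\cal F}_{P})$ is equivalent to the existence of a sequence $\{Q_k\}\subset\orb(P)$ with $Q_k\to Q$ in $\POL^{ss}_{d,m\times m}$, because then ${\cal F}_{Q_k}\in\orb({\cal F}_{P})$ (by the strong linearization property together with Theorem \ref{skewlin}, which show that $Q_k\in\orb(P)$ forces $Q_k$ and $P$ to share the complete eigenstructure at the polynomial level, whence ${\cal F}_{Q_k}$ and ${\cal F}_{P}$ share the skew-symmetric KCF at the pencil level) and ${\cal F}_{Q_k}\to{\cal F}_{Q}={\cal G}$. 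Thus the task reduces to showing that $Q\in\overline{\orb}(P)$.

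To produce such a sequence, I would exploit the hypothesis ${\cal F}_{Q}\in\overline{\orb^c}({\cal F}_{P})$: combining Theorem \ref{lkh} with the skew-symmetric analogue of Theorem \ref{erelations} (the rules being applied in pairs to preserve the skew-symmetric structure, exactly as in the proof of Theorem \ref{th:improvedDeDo}), the skew-symmetric KCF of ${\cal F}_{Q}$ is obtained from that of ${\cal F}_{P}$ by a finite sequence of elementary degeneration rules. The strong linearization property preserves the finite and infinite elementary divisors, and Theorem \ref{skewlin} gives a bijection (the shift by $(d-1)/2$) between the minimal indices of $P$ and those of ${\cal F}_{P}$ (and likewise for $Q$ and ${\cal F}_{Q}$). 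Since $P$ and $Q$ share the same grade $d$, this shift is the same for both, so the degeneration rules on ${\cal F}_{P}\rightsquigarrow{\cal F}_{Q}$ translate one-to-one into degeneration rules on the complete eigenstructure of $P\rightsquigarrow Q$. For each such elementary rule I would construct an explicit arbitrarily small skew-symmetric perturbation of $P$ in $\POL^{ss}_{d,m\times m}$ realizing it (in the spirit of Lemma \ref{srank}, where a concrete small skew-symmetric correction was built to change the rank by a prescribed amount); composing such one-parameter perturbations for each rule applied in the chain yields the sought sequence $Q_k\to Q$ in $\orb(P)$.

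The main obstacle is this last step: exhibiting explicit structure-preserving perturbations that realize, at the polynomial level and within the fixed grade $d$, each elementary rule of Theorem \ref{erelations} applied in pairs. The rules of type (1)--(5) are relatively standard (they amount to merging/splitting or annihilating singular and Jordan blocks), while rule (6), which creates finite/infinite eigenvalues out of a pair of minimal indices, is the most delicate and requires a careful construction exploiting the special form \eqref{linform}.
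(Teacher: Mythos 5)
Your first inclusion and the reduction of the second one are both correct and match the standard argument: $\GSYL^{ss}_{d,m\times m}$ is a closed affine subspace of $\PEN^{ss}_{md\times md}$, $\orb({\cal F}_P)=\orb^c({\cal F}_P)\cap\GSYL^{ss}_{d,m\times m}$, and via the isometry $f$ the hard inclusion reduces to showing that ${\cal F}_Q\in\overline{\orb^c}({\cal F}_P)\cap\GSYL^{ss}_{d,m\times m}$ forces $Q\in\overline{\orb}(P)$, equivalently ${\cal F}_Q\in\overline{\orb}({\cal F}_P)$.

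The way you propose to close that last step, however, has a genuine gap, and it is not a small one. You want to read off from $\overline{\orb^c}({\cal F}_P)\supseteq\orb^c({\cal F}_Q)$ a chain of degeneration rules, transport it to the polynomial level through the shift of Theorem \ref{skewlin}, and then realize each rule by an explicit arbitrarily small skew-symmetric perturbation of $P$ inside $\POL^{ss}_{d,m\times m}$. That final construction is exactly the hard content of the orbit-closure (stratification) theory for skew-symmetric matrix polynomials of fixed grade, and in the literature it is a \emph{consequence} of the present lemma, not an ingredient of its proof; carrying it out from scratch (in particular your rule 6, which creates eigenvalues from a pair of minimal indices while keeping the perturbation skew-symmetric and of grade $\le d$) is a substantial separate theorem that you leave entirely unproved. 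There is also a secondary issue: the intermediate pencils in a chain of rules joining ${\cal F}_Q$ to ${\cal F}_P$ need not be linearizations of any grade-$d$ polynomial (e.g.\ they may have minimal indices below $(d-1)/2$), so the claimed ``one-to-one translation'' of rules to the polynomial level is not automatic. The proof cited by the paper avoids all of this: one takes a sequence $S_k^T{\cal F}_P S_k\to{\cal F}_Q$ in $\PEN^{ss}_{md\times md}$ and invokes a perturbation result for the linearization template \eqref{linform} --- every skew-symmetric pencil sufficiently close to a pencil of $\GSYL^{ss}_{d,m\times m}$ is congruent, via a congruence tending to the identity, to a pencil of $\GSYL^{ss}_{d,m\times m}$ --- which replaces $S_k^T{\cal F}_P S_k$ by pencils ${\cal F}_{Q_k}\in\orb^c({\cal F}_P)\cap\GSYL^{ss}_{d,m\times m}=\orb({\cal F}_P)$ still converging to ${\cal F}_Q$. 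That ``structure-preserving neighborhood'' lemma is the key idea missing from your plan; with it, no explicit polynomial-level perturbations are needed.
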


{ We are finally in the position of stating and proving the main result of this paper.}
\begin{theorem} \label{mainth}
Let $d,m$, and $r$ be integers such that $d \geq 1$ is even, $m \geq 2$, and $2 \leq 2r \leq (m-1)$.
The set of $m\times m$ complex skew-symmetric matrix polynomials of grade $d$ with rank at most $2r$ is a closed subset of $\POL_{d,m\times m}^{ss}$ equal to $\overline{\orb}(W)$, where { $W \in \POL^{ss}_{d, m\times m}$} is an $m \times m$ complex skew-symmetric matrix polynomial of degree exactly $d$ and rank exactly $2r$ with no elementary divisors at all, with $t$ left minimal indices equal to $(\beta +1)$ and with $(m-2r-t)$ left minimal indices equal to $\beta$, where $\beta = \lfloor rd / (m-2r) \rfloor$ and $t \equiv rd \mod (m-2r)$, and with the right minimal indices equal to the left minimal indices.
\end{theorem}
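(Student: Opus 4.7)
The plan is to reduce the even-grade case to the odd-grade linearization machinery of Section \ref{sect.polys} by artificially raising the grade. For $Q \in \POL^{ss}_{d, m\times m}$ with $\rank Q \leq 2r$, I would set $\widetilde Q(\lambda) := \lambda^{d+1}\cdot 0 + Q(\lambda) \in \POL^{ss}_{d+1, m\times m}$, and likewise $\widetilde W := \lambda^{d+1}\cdot 0 + W$, so that the ambient grade becomes the odd value $d+1$. The easy inclusion $\overline{\orb}(W) \subseteq \{P \in \POL^{ss}_{d, m\times m}: \rank P \leq 2r\}$ follows from the closedness of the rank condition. For the reverse inclusion, I would first reduce to the case $\rank Q = 2r$ using Lemma \ref{srank} together with the closedness of $\overline{\orb}(W)$, and then appeal to Lemma \ref{pori}(g)-2 to replace $Q \in \overline{\orb}(W)$ with the equivalent assertion $\widetilde Q \in \overline{\orb}(\widetilde W)$, moving all subsequent analysis to grade $d+1$.

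The next step is to apply the homeomorphism $f:\POL^{ss}_{d+1,m\times m} \to \GSYL^{ss}_{d+1,m\times m}$ together with Lemma \ref{cl} to translate the goal into showing ${\cal F}_{\widetilde Q} \in \overline{\orb^c}({\cal F}_{\widetilde W})$ in $\PEN^{ss}_{n\times n}$, where $n = m(d+1)$. I would then invoke Theorem \ref{th:improvedDeDo} with $w := r + md/2$, which is an integer precisely because $d$ is even; the numerical hypotheses $2 \leq 2w \leq n-1$ and $r \leq w$ follow from $2 \leq 2r \leq m-1$. Two conditions must be verified. First, ${\cal F}_{\widetilde Q}$ has rank $2w = 2r + md$ and exactly $r$ ${\cal K}$-blocks: the block count follows from $\widetilde Q_{d+1} = 0$ together with Theorem \ref{gammath}(b), which forces all $r$ partial multiplicities of $\widetilde Q$ at infinity to be at least $1$, so each of them produces a genuine ${\cal K}$-block. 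Second, ${\cal F}_{\widetilde W}$ must be congruent to the canonical form ${\cal W}$ displayed in \eqref{max}: Lemma \ref{pori}(d) yields $r$ blocks of type ${\cal K}_1$ (because $W$ has no elementary divisors at $\infty$), and Theorem \ref{skewlin} gives the sizes of the ${\cal M}$-blocks as the minimal indices of $\widetilde W$ shifted by $d/2$, namely $t$ copies of $\beta + 1 + d/2$ and $m-2r-t$ copies of $\beta + d/2$.

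The main obstacle will be the arithmetic matching of these ${\cal M}$-block sizes with the pair $(\alpha,s)$ of \eqref{max}. Using $n - 2w = m - 2r$ and $w - r = md/2$, the identity $md/2 - rd = (m-2r)d/2$ should give $\alpha = \beta + d/2$ and $s = t$, but only if $d/2$ is an integer, i.e., only if $d$ is even. This is the single step where the parity hypothesis enters essentially, and it also explains why the even-grade case does not follow formally from the odd-grade result of \cite{DmDo18}.
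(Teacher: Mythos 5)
Your proposal is correct and follows essentially the same route as the paper's proof: lifting to odd grade $d+1$ via $\widetilde Q=\lambda^{d+1}\cdot 0+Q$, using Lemma \ref{pori} to transfer orbit closure statements between grades $d$ and $d+1$, passing to the linearization \eqref{linform} through Lemma \ref{cl}, invoking Theorem \ref{th:improvedDeDo} with $w=r+md/2$, and matching $(\alpha,s)$ with $(\beta+d/2,t)$ by exactly the arithmetic the paper carries out (where the parity of $d$ indeed enters only through the integrality of $d/2$). The only ingredient you leave implicit is the existence of a skew-symmetric $W\in\POL^{ss}_{d,m\times m}$ of degree exactly $d$ and rank exactly $2r$ realizing the prescribed list of minimal indices, which the paper settles by checking that the sum of the left (equivalently, right) minimal indices equals $rd$ and invoking \cite[Thm.~3.3]{Dmyt15}.
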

\begin{proof} 
By Lemma \ref{srank}, each skew-symmetric matrix polynomial in $\POL^{ss}_{d, m\times m}$ of rank $2r_1$, with $r_1 < r$, is in the closure of the subset of $\POL^{ss}_{d, m\times m}$ formed by the matrix polynomials of rank exactly $2r$. Therefore it remains to show that any skew-symmetric matrix polynomial of rank exactly $2r$ in $\POL^{ss}_{d, m\times m}$ is in the closure of the orbit of $W$  as in the statement.

Denote the complete eigenstructure described in the statement (consisting only of the left and right minimal indices) by
\begin{equation}
\label{kcilist}
{\bf W}: \bigg\{\overbrace{\underbrace{\beta+1, \dots , \beta+1}_{t},\underbrace{\beta, \dots , \beta}_{m-2r-t}}^{\text{left minimal indices}}, \overbrace{\underbrace{\beta+1, \dots , \beta+1}_{t}, \underbrace{\beta, \dots , \beta}_{m-2r-t}}^{\text{right minimal indices}} \bigg\}.
\end{equation}
First we show that there exists an $m\times m$ skew-symmetric matrix polynomial { $W \in \POL^{ss}_{d, m\times m}$} of degree exactly $d$ and rank exactly $2r$ that has the complete eigenstructure~${\bf W}$ {   in \eqref{kcilist}}. By \cite[Thm. 3.3]{Dmyt15} it is enough to show that the sum of the left (or right) minimal indices of ${\bf W}$ is equal to $rd$:
\begin{align*}
&\sum_1^t (\beta +1) + \sum_1^{m-2r-t} \beta
= \sum_1^{m-2r} \beta + t = (m-2r) \lfloor rd/(m-2r) \rfloor + t = rd.
\end{align*}

{ Next, as in Subsection \ref{subsec.cdd1}, we add the term $\lambda^{d+1} \, 0$ to $W$ and denote the (entry-wise identical) result by $\widetilde{W}$, i.e., $\widetilde{W} = \lambda^{d+1} \, 0 + W \in \POL^{ss}_{d+1, m\times m}$, in order to view the polynomial as a polynomial of odd grade $d+1$. Parts (a)--(d) in Lemma \ref{pori} imply that the complete eigenstructure of $\widetilde{W}$ is given by the minimal indices in \eqref{kcilist} together with $2r$ infinite elementary divisors with degree equal to one, i.e., the sequence of partial multiplicities at $\infty$ of $\widetilde{W}$ is $\gamma_1=\gamma_1 = \gamma_2=\gamma_2 = \dots = \gamma_{r} = \gamma_r =1$.}

For every { skew-symmetric} $m \times m$ matrix polynomial $P$ of odd grade $d+1$ and rank at most $2r$, the linearization ${\cal F}_{P}$ in \eqref{linform} has rank { equal to $\rank(P) + md$, i.e., rank at most $2r+md$}, because ${\cal F}_{P}$ is unimodularly equivalent to $P \oplus I_{md}$.
The linearization ${   {\cal F}}_{\widetilde{W}}$ of the matrix polynomial $\widetilde{W}$ is an $m(d+1) \times m(d+1)$ skew-symmetric matrix pencil with rank $2r+md$ and, by Theorem \ref{skewlin}, the KCF of ${   {\cal F}}_{\widetilde{W}}$ is the direct sum of the following blocks:
\begin{equation} \label{cka}
\begin{split}
\Big\{\underbrace{{\cal L}_{\beta+\eta+1}, \dots , {\cal L}_{\beta+\eta+1}}_{t},\underbrace{{\cal L}_{\beta+\eta}, \dots , {\cal L}_{\beta+\eta}}_{m-2r-t}, \underbrace{{\cal L}_{\beta+\eta+1}^T, \dots , {\cal L}_{\beta+\eta+1}^T}_{t}, \underbrace{{\cal L}_{\beta+\eta}^T, \dots , {\cal L}_{\beta+\eta}^T}_{m-2r-t}, \\
\underbrace{{\cal E}_{1}(\infty),\dots,{\cal E}_{1}(\infty)}_{2r}\Big\},
\end{split}
\end{equation}
where $\eta=\frac{1}{2}d$.  Next, we show that the KCF of ${\cal F}_{\widetilde{W}}$ as in \eqref{cka} coincides with the KCF of the most generic skew-symmetric matrix pencil ${\cal W}$ of rank $2w=2r+md$ and size $n \times n$, where $n = m (d+1)$, having exactly $r$ { ${\cal K}$-blocks} associated to the infinite eigenvalue in its skew-symmetric KCF. This generic KCF is given in Theorem~\ref{th:improvedDeDo}, namely:
\begin{equation} \label{pg}
\begin{split}
\Big\{\underbrace{{\cal L}_{\alpha+1}, \dots , {\cal L}_{\alpha+1}}_{s},\underbrace{{\cal L}_{\alpha}, \dots , {\cal L}_{\alpha}}_{n-2w-s}, \underbrace{{\cal L}_{\alpha+1}^T, \dots , {\cal L}_{\alpha+1}^T}_{s}, \underbrace{{\cal L}_{\alpha}^T, \dots , {\cal L}_{\alpha}^T}_{n-2w-s},
\underbrace{{\cal E}_{1}(\infty),\dots,{\cal E}_{1}(\infty)}_{2r}\Big\}.
\end{split}
\end{equation}
In other words, we  are going to show that the numbers and the sizes of the ${\cal L}$ and ${\cal L}^T$ blocks in \eqref{cka} and \eqref{pg} coincide, namely $\beta  + d/2 = \alpha$, $s=t$, and $m-2r-t = n - 2w - s$.

For the sizes of the blocks we have
\begin{align*}
\beta  +  \frac{d}{2}& =\left\lfloor \frac{rd}{m-2r} \right\rfloor + \frac{d}{2} =  \left\lfloor  \frac{(m-2r)d + 2rd}{2(m-2r)} \right\rfloor  =  \left\lfloor  \frac{ md}{2(m-2r)} \right\rfloor  \\
& = \left\lfloor  \frac{md+2r - 2r}{2(m(d+1) - (md + 2r))} \right\rfloor = \left\lfloor  \frac{2w - 2r}{2(n - 2w)} \right\rfloor = \alpha.
\end{align*}

For the numbers of the blocks, we have
\begin{align*}
t &\equiv rd  \equiv(m-2r)d/2 + rd \mod (m-2r) \\
&\equiv (md+2r - 2r)/2 \mod (m(d+1) - (md + 2r))\\
&\equiv w-r \equiv s\mod (n - 2w)
\end{align*}
and, since $s,t < n-2w$, it must be $t=s$. Moreover,
$$m-2r-t = m(d+1) - md - 2r -t= n - 2w - s.$$

Thus ${{\cal F}}_{\widetilde{W}}$ is congruent to the most generic skew-symmetric matrix pencil $\mathcal{W}$, with exactly $r$ { ${\cal K}$-blocks corresponding to the infinite eigenvalue in its skew-symmetric KCF and of rank $2w = 2r + md$}, obtained in Theorem \ref{th:improvedDeDo}, since they both are skew-symmetric and have the same KCF. Therefore $\orb^c({   {\cal F}}_{\widetilde{W}})=\orb^c(\mathcal{W})$.

{ Given any $P \in \POL^{ss}_{d,m\times m}$ of rank exactly $2r$, the entry-wise identical matrix polynomial $\widetilde{P} = \lambda^{d+1} \, 0 + P \in \POL^{ss}_{d+1,m\times m}$ has rank exactly $2r$ and exactly $2r$ infinite elementary divisors, by Lemma \ref{pori}-(a)-(d). So, by Theorem \ref{th:improvedDeDo}}, we have that $\overline{\orb^c}(\mathcal{W})  \supseteq \orb^c ({   {\cal F}}_{\widetilde{P}})$, thus $\overline{\orb^c}({   {\cal F}}_{\widetilde{W}}){ =\overline{\orb^c}({\cal W})} \supseteq \orb^c ({   {\cal F}}_{\widetilde{P}})$. Therefore { $\overline{\orb^c}({   {\cal F}}_{\widetilde{W}}) \cap \GSYL^{ss}_{d+1,m\times m}  \supseteq \orb^c({   {\cal F}}_{\widetilde{P}}) \cap \GSYL^{ss}_{d+1,m\times m}$}, and Lemma~\ref{cl} implies
$\overline{\orb}({   {\cal F}}_{\widetilde{W}})  \supseteq \orb({   {\cal F}}_{\widetilde{P}})$. According to the discussion after \eqref{linorb}, $\overline{\orb}({   {\cal F}}_{\widetilde{W}}) \supseteq \orb({   {\cal F}}_{\widetilde{P}})$ is equivalent to $\overline{\orb}(\widetilde{W}) \supseteq \orb(\widetilde{P})$ and by { Lemma~\ref{pori}-(g)-3} the latter is equivalent to $\overline{\orb}(W) \supseteq \orb(P)$. Therefore any $m\times m$ skew-symmetric matrix polynomial of grade $d$, with rank exactly $2r$ { belongs to} $\overline{\orb}(W)$. This completes the proof (we remind that the skew-symmetric matrix polynomials of grade $d$ with rank smaller than $2r$ are treated at the beginning of this proof).
\end{proof}

\begin{example}
In Table {\rm\ref{table2}} we provide examples of the eigenstructures (namely, the left minimal indices) of generic skew-symmetric matrix polynomials of grade $2$ with various sizes (from $3 \times 3$ to $7\times 7$) and ranks ($2,4$, and $6$) obtained from Theorem {\rm\ref{mainth}}.

\begin{table}[hbt!] 
\centering
\begin{tabular}{|r|c|c|c|c|c|}
  \hline
         & $3\times 3$& $4\times 4$   & $5\times 5$   & $6\times 6$   &$7\times 7$ \\
  \hline
  rank 2 & \{2\}  & \{1, 1\}  & \{1, 1, 0\} & \{1, 1, 0, 0\} & \{1, 1, 0, 0, 0\} \\
  rank 4 & - & - & \{4\}  & \{2, 2\}  & \{2, 1, 1\}  \\
  rank 6 & - & - & -  & - & \{6\}  \\
  \hline
\end{tabular}
  \caption{\label{table2} Left minimal indices of the generic skew-symmetric matrix polynomial of grade~2 and of certain sizes (columns correspond to different sizes) and rank (rows correspond to different ranks). Note that the right minimal indices are equal to the left minimal indices, and no elementary divisors are present in these polynomials.}
  \end{table}
\end{example}

Note that Theorem \ref{mainth} is an even-grade-version of the corresponding theorem for the odd grade { skew-symmetric} matrix polynomials \cite[Thm. 5.2]{DmDo18}. This allows us to state the following theorem that does not depend on the parity of $d$.

\begin{theorem} \label{anydth}
Let $m,r$, and $d$ be integers such that $m \geq 2$, $d \geq 1$, and $2 \leq 2r \leq (m-1)$.
The set of $m\times m$ complex skew-symmetric matrix polynomials of grade $d$ with rank at most $2r$ is a closed subset of $\POL_{d,m\times m}^{ss}$ equal to $\overline{\orb}(W)$, where { $W \in \POL^{ss}_{d, m\times m}$} is an $m \times m$ complex skew-symmetric matrix polynomial of degree exactly $d$ and rank exactly $2r$ with no elementary divisors at all, with $t$ left minimal indices equal to $(\beta +1)$ and with $(m-2r-t)$ left minimal indices equal to $\beta$, where $\beta = \lfloor rd / (m-2r) \rfloor$ and $t = rd \mod (m-2r)$, and with the right minimal indices equal to the left minimal indices.
\end{theorem}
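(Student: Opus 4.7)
The plan is essentially a case split on the parity of the grade $d$, invoking two separate results that together cover all cases. Since the description of the generic polynomial $W$ (degree exactly $d$, rank exactly $2r$, no elementary divisors, and the specified partition of the minimal indices via $\beta=\lfloor rd/(m-2r)\rfloor$ and $t\equiv rd\bmod(m-2r)$) is given by an identical formula regardless of whether $d$ is even or odd, no reconciliation of the two statements is needed beyond checking that they really do coincide term by term.

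First I would observe that the hypotheses $m\geq 2$, $d\geq 1$, $2\leq 2r\leq m-1$ transfer directly to both subcases. If $d$ is even, Theorem \ref{mainth} applies verbatim: the set of $m\times m$ complex skew-symmetric matrix polynomials of grade $d$ with rank at most $2r$ is closed in $\POL^{ss}_{d,m\times m}$ and equals $\overline{\orb}(W)$, with $W$ having exactly the complete eigenstructure described in the statement of Theorem \ref{anydth}. If $d$ is odd, I would invoke \cite[Thm.~5.2]{DmDo18}, which gives the same conclusion for odd grade with the very same formulas for $\beta$ and $t$; the fact that the rank is even and that the right minimal indices of a skew-symmetric polynomial equal the left ones is already built into both statements.

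The only genuinely substantive point is therefore a bookkeeping check: verifying that the parameters $\beta$ and $t$ from \cite[Thm.~5.2]{DmDo18} agree with those introduced here, and that the existence claim for $W$ of degree exactly $d$ is consistent with both. This last point follows in both parities from \cite[Thm.~3.3]{Dmyt15}, exactly as used in the proof of Theorem \ref{mainth}, because the sum of the left (equivalently, right) minimal indices of the prescribed eigenstructure equals
\[
t(\beta+1)+(m-2r-t)\beta=(m-2r)\beta+t=rd,
\]
which is the required sum for a rank-$2r$ skew-symmetric matrix polynomial of grade $d$ with only singular part and no elementary divisors.

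No real obstacle is expected here since the work has already been done in Theorem \ref{mainth} (this paper) and in \cite[Thm.~5.2]{DmDo18}; Theorem \ref{anydth} is a unified restatement. The proof therefore reduces to: \emph{split on the parity of $d$ and quote the corresponding result}, noting that the formulas for the generic eigenstructure, being polynomial in $d$ and $r$, coincide in both cases.
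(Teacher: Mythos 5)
Your proposal is correct and matches the paper's proof exactly: the paper also just splits on the parity of $d$, citing Theorem \ref{mainth} for even $d$ and \cite[Thm.~5.2]{DmDo18} for odd $d$. The additional bookkeeping you include (the sum-of-minimal-indices check via \cite[Thm.~3.3]{Dmyt15}) is harmless but not needed, since it is already part of the proofs of the two quoted results.
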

\begin{proof}
If $d$ is odd then the statement coincides with \cite[Thm. 5.2]{DmDo18}, and if $d$ is even then it coincides with Theorem \ref{mainth}.
\end{proof}

\section{Codimension computations} \label{sec.codim}

We start by recalling that the congruence orbit of an $n\times n$ skew-symmetric pencil $\lambda A - B$ is a {   differential} manifold in the complex $n^2-n$ dimensional space of skew-symmetric pencils { $\PEN^{ss}_{n \times n}$}. For  $\lambda A - B$, define the {\it dimension} of $\orb^c (\lambda A - B)$
to be the dimension of the tangent space to this orbit at the point $\lambda A - B$, namely:
\begin{equation*}
\tsp_{\lambda A - B}^c:=\{\lambda (X^TA+AX) - (X^TB + BX):
X\in{\mathbb
C}^{n\times n}\}
\end{equation*}
The orthogonal complement (with respect to the Frobenius inner product) to $\tsp_{\lambda A - B}^c$,
is called the {\em normal space} to $\orb^c (\lambda A - B)$ at the point $\lambda A - B$. The dimension of the normal space is the {\it codimension} of the congruence orbit of $\lambda A - B$ and is equal to $n(n-1)$ minus the dimension of the congruence orbit of $\lambda A - B$.
Explicit expressions for the codimensions of congruence orbits of skew-symmetric pencils in $\PEN_{n\times n}^{ss}$ are presented in \cite{DmKS13}, see also \cite{Dmyt16}, and implemented in the MCS (Matrix Canonical Structure) Toolbox \cite{DmJK13}.
Recall that, for $Q \in \POL_{d, m\times m}^{ss}$ of odd grade, we define the codimension of $\orb (Q) \subset \POL_{d, m\times m}^{ss}$ to be
$ \cod_{\POL} \orb (Q) := \cod_{\GSYL} \orb({   {\cal F}}_{Q})$, where by $\cod_X (\orb)$ we mean the codimension of the orbit $\orb$ in the space $X$, see \cite{Dmyt15,DmDo18}.
Moreover,
$\cod_{\GSYL} \orb({{\cal F}}_{P})= \cod_{\PEN} \orb^{c}({{\cal F}}_{P})$, by \cite[Sect.~6]{Dmyt15}.

Now let $P\in \POL_{d, m\times m}^{ss}$ be a matrix polynomial of even grade $d$ and { $\widetilde{P} := 0 \, \lambda^{d+1} + P \in \POL_{d+1, m\times m}^{ss}$ be a matrix polynomial of odd grade $d+1$}. Denote by { $\GSYL_{d+1, m\times m}^{ss,0}$} the space of the linearizations \eqref{linform} of { skew-symmetric} polynomials $\widetilde{P}$ of odd grade $d+1$ with { zero coefficient for $\lambda^{d+1}$}. For a polynomial $P$ of even grade $d$, we define the codimension of the orbit, $\cod_{\POL} \orb (P)$, as $\cod_{\GSYL^{0}} \orb ({\cal F}_{\widetilde{P}})$, i.e., the codimension of $\orb ({\cal F}_{\widetilde{P}})$ in the space { $\GSYL_{d +1, m\times m}^{ss,0}$}. Recall also that the dimension of $\orb ({\cal F}_{\widetilde{P}})$ in the space { $\GSYL_{d+1, m\times m}^{ss}$} is defined as the dimension of { $\tsp_{{\cal F}_{\widetilde{P}}}^c \cap \GSYL_{d+1, m\times m}^{ss}$} and, similarly, the dimension of { $\tsp_{{\cal F}_{\widetilde{P}}}^c \cap \GSYL_{d+1, m\times m}^{ss,0}$} is equal to the dimension of $\orb ({\cal F}_{\widetilde{P}})$ in { $\GSYL_{d+1, m\times m}^{ss,0}$}. In Lemma \ref{dimeq} we show that these two dimensions are equal to each other.

\begin{lemma}
\label{dimeq}
Let $\widetilde{P}= 0 \, \lambda^{d+1} + P_d \lambda^{d} + \dots + P_1 \lambda + P_0 \, { \in \POL^{ss}_{d+1, m\times m}}$ be a { skew-symmetric} matrix polynomial. The dimension of $\orb({{\cal F}}_{\widetilde{P}})$ in { $\GSYL_{d+1, m\times m}^{ss}$}is equal to the dimension of $\orb({{\cal F}}_{\widetilde{P}})$ in { $\GSYL_{d+1, m\times m}^{ss,0}$}.
\end{lemma}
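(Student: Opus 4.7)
The plan is to prove the stronger statement that the two subspaces $\tsp^c_{{\cal F}_{\widetilde P}} \cap \GSYL^{ss}_{d+1,m\times m}$ and $\tsp^c_{{\cal F}_{\widetilde P}} \cap \GSYL^{ss,0}_{d+1,m\times m}$ in fact coincide, from which the equality of dimensions is immediate. The inclusion of the second into the first is automatic since $\GSYL^{ss,0}_{d+1,m\times m}\subseteq \GSYL^{ss}_{d+1,m\times m}$, so only the reverse inclusion requires argument, and the whole proof reduces to a block computation identifying the (1,1) block of the leading coefficient of a tangent vector.

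Write ${\cal F}_{\widetilde P} = \lambda A - B$ and view $A$ as a $(d+1)\times(d+1)$ array of $m\times m$ blocks $(A_{ij})$ according to the template \eqref{linform} with $d$ replaced by $d+1$. The first key observation, obtained by direct inspection of \eqref{linform} for the odd index $i=1$, is that the only potentially nonzero block in the first block row and first block column of $A$ is the (1,1) block, which equals $A_{d+1}$. Since the coefficient of $\lambda^{d+1}$ in $\widetilde P$ is zero by assumption, one has $A_{1k}=0$ and $A_{k1}=0$ for every $k=1,\dots,d+1$. Partitioning any $X\in\mathbb{C}^{m(d+1)\times m(d+1)}$ into $m\times m$ blocks $X=(X_{ij})$, a routine block-matrix calculation then yields
\[
(X^T A + A X)_{11} = \sum_{k=1}^{d+1} X_{k1}^T A_{k1} + \sum_{k=1}^{d+1} A_{1k} X_{k1} = 0,
\]
so the $(1,1)$ block of the leading pencil coefficient of every element of $\tsp^c_{{\cal F}_{\widetilde P}}$ vanishes, regardless of the choice of $X$.

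To conclude, note that an element of $\GSYL^{ss}_{d+1,m\times m}$ is of the form ${\cal F}_{\widetilde Q}$ for some $\widetilde Q\in\POL^{ss}_{d+1,m\times m}$, and by construction it belongs to $\GSYL^{ss,0}_{d+1,m\times m}$ precisely when the coefficient of $\lambda^{d+1}$ in $\widetilde Q$ vanishes, which is the same as the $(1,1)$ block of the leading pencil coefficient being zero. Combining this with the computation above, every tangent vector that lies in $\GSYL^{ss}_{d+1,m\times m}$ automatically lies in $\GSYL^{ss,0}_{d+1,m\times m}$, giving the desired equality of subspaces and hence of dimensions. There is no real obstacle; the only step requiring mild care is the verification that the first block row and column of the leading coefficient template of \eqref{linform} have only the $(1,1)$ entry potentially nonzero, which is just the inspection of the cases $i=1$ (odd) in the formulas defining ${\cal F}_P(i,i)$, ${\cal F}_P(i,i+1)$ and ${\cal F}_P(i+1,i)$.
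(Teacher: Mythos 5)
Your proposal is correct and follows essentially the same route as the paper: both arguments reduce to observing that, because the $\lambda^{d+1}$ coefficient of $\widetilde P$ vanishes, the first block row and column of the leading coefficient of ${\cal F}_{\widetilde P}$ are zero, hence every element $X^T{\cal F}_{\widetilde P}+{\cal F}_{\widetilde P}X$ of the tangent space has zero $(1,1)$ block in its $\lambda$-coefficient, which forces $\tsp^c_{{\cal F}_{\widetilde P}}\cap\GSYL^{ss}_{d+1,m\times m}=\tsp^c_{{\cal F}_{\widetilde P}}\cap\GSYL^{ss,0}_{d+1,m\times m}$. Your write-up merely makes the block computation behind the paper's display \eqref{tansp} explicit.
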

\begin{proof}
The tangent space $\tsp_{{\cal F}_{\widetilde{P}}}^c = { \{X^T {\cal F}_{\widetilde{P}} + {\cal F}_{\widetilde{P}} X : X\in{\mathbb C}^{n\times n}\}}$ to $\orb^c({{\cal F}}_{\widetilde{P}})$ at the point ${\cal F}_{\widetilde{P}}$ consist of the skew-symmetric pencils of the following shape:
\begin{equation}\label{tansp}
    { X^T {\cal F}_{\widetilde{P}} + {\cal F}_{\widetilde{P}} X } =
\lambda \begin{bmatrix}
    0_{m\times m}&*&\dots&*\\
    *&*&\dots&*\\
    \vdots&\vdots&&\vdots\\
    *&*&\dots&*\\
\end{bmatrix} -
\begin{bmatrix}
    *&*&\dots&*\\
    *&*&\dots&*\\
    \vdots&\vdots&&\vdots\\
    *&*&\dots&*\\
\end{bmatrix},
\end{equation}
where $*$ denote possibly non-zero entries. The zero block in the $(1,1)$ entry of the $\lambda$-matrix of \eqref{tansp} ensures that $\tsp_{{\cal F}_{\widetilde{P}}} \cap  \GSYL_{d+1, m\times m}^{ss}$ is equal to $\tsp_{{\cal F}_{\widetilde{P}}} \cap  \GSYL_{d+1, m\times m}^{ss,0}$, and so their dimensions are equal to each other.
\end{proof}

Recall that the dimension and codimension of any given orbit sum up to the dimension of the whole space. Note also that  $\dim \GSYL_{d+1, m\times m}^{ss} = \dim \GSYL_{d+1, m\times m}^{ss,0} + \frac{m(m-1)}{2}$. Thus, taking into account Lemma \ref{dimeq}, the codimension of $\orb({{\cal F}}_{\widetilde{P}})$ in $\GSYL_{d+1, m\times m}^{ss}$ has to be $\frac{m(m-1)}{2}$  larger than the codimension of $\orb({{\cal F}}_{\widetilde{P}})$ in $\GSYL_{d+1, m\times m}^{ss,0}$.

Therefore, for the generic $m\times m$ skew-symmetric matrix polynomial $P$ with even grade $d$ identified in Theorem \ref{mainth}, we have
\begin{align} \nonumber
\cod_{\POL} \orb(P) &= \cod_{\GSYL^0} \orb({{\cal F}}_{\widetilde{P}}) = \cod_{\GSYL} \orb({{\cal F}}_{\widetilde{P}}) - \frac{m(m-1)}{2} \\ \nonumber
&= \cod_{\PEN} \orb^{c} ({\cal F}_{\widetilde{P}}) - \frac{m(m-1)}{2}\\  \label{codequalities} & = \cod_{\PEN} \orb^{c} ({\cal W}) - \frac{m(m-1)}{2},
\end{align}
where ${\cal W}$ is the pencil in Theorem \ref{th:improvedDeDo} with the identifications $n=m(d+1)$ and $w = (md + 2r)/2$.
Using \cite[Thm.~3]{DmKS13}, see also \cite[Thm. 2.8]{DmJK13}, applied to the skew-symmetric pencil ${\cal W}$ in \eqref{max}, the codimension of $\orb^c ({\cal W})$ in $\PEN_{n\times n}^{ss}$ is a sum of the part corresponding to the ${\cal K}$ blocks, { namely} $r(2r-1)$, the part corresponding to the interaction between the ${\cal K}$ blocks and ${\cal M}$ blocks, { namely} $2r(n-2w)$, and the part corresponding to the ${\cal M}$ blocks, { namely} $\sum_{i<j} (2 \max \{m_i,m_j \} + \varepsilon_{ij})$, where $m_i$ and $m_j$ are the indices of the ${\cal M}$ blocks (either $\alpha$ or $\alpha +1$), $\varepsilon_{ij} = 2$ if $m_i=m_j$ and $\varepsilon_{ij} = 1$ otherwise, resulting in:
\begin{equation*} \label{codimcomp}
\begin{aligned}
&{ \cod_{\PEN} } \orb^c ({\cal W}) = (n-2w-s)(n-2w-s-1)(2\alpha +2)/2  \\ 
&\phantom{a} + s(s-1)(2(\alpha+1) +2)/2  + s(n-2w-s)(2(\alpha+1) +1) + 2r(n-2w) + r(2r-1) \\
&\phantom{a} = (n-2w-s)(n-2w-s-1)(\alpha +1) + s(s-1)(\alpha+1)  + s(s-1) \\
&\phantom{a} + 2s(n-2w-s)(\alpha+1) + s(n-2w-s) + 2r(n-2w) + r(2r-1)\\
&\phantom{a} = (\alpha +1) ((n-2w-s)(n-2w-1) + s(n-2w-1))  \\
&\phantom{a} + s(n-2w-1) + 2r(n-2w) + r(2r-1)\\
&\phantom{a} =(n-2w)(n-2w-1)(\alpha +1)+s(n-2w-1)  + 2r(n-2w) + r(2r-1)\\
&\phantom{a} =(n-2w-1)((n-2w)\alpha +s + n-2w)  + 2r(n-2w) + r(2r-1) \\
&\phantom{a} =(n-2w-1)(n-w-r) + 2r(n-2w) + r(2r-1).
\end{aligned}
\end{equation*}
Substituting $n=m(d+1)$ and $w = (md + 2r)/2$ in { the previous identity} we obtain:
\begin{align*}
\cod_{\GSYL} \orb({\cal F}_{\widetilde{P}}) & = (n - 2w - 1)(n-w-r) +2r(n-2w) + r(2r-1) \\
            & = (m(d+1) - 2r -md - 1)(m(d+1) - (2r + md)/2 -r) \\
            &+2r(m(d+1) -2r -md) + r(2r-1) \\
            & = (m - 2r - 1)(md+2m-4r)/2 +2mr - { 2 r^2} - r\\
             & = (m - 2r - 1)(md+m-2r)/2 +m(m-1)/2 .
\end{align*}
Taking into account equation \eqref{codequalities}, we obtain:
\begin{align*} { \cod_{\POL} \orb(P)} & = 
\cod_{\GSYL^0} \orb({\cal F}_{\widetilde{P}}) = \cod_{\GSYL} \orb({\cal F}_{\widetilde{P}}) - m(m-1)/2 \\
&= (m - 2r - 1)(md+m-2r)/2,
\end{align*}
{ which coincides with the codimension found in \cite[p. 15]{DmDo18} for the odd-grade case.}

\section{Conclusions and future work} \label{sect.conclusions}
This paper closes the knowledge gap for skew-symmetric matrix polynomials  with bounded rank and fixed grade. Namely, it provides the generic complete eigenstructure for skew-symmetric matrix polynomials of rank at most $2r$ and even grade $d$. It is also the first paper that tackles such problem for the structured matrix polynomials of even grade. Moreover, the obtained formulas (for the generic eigenstructures and codimensions) are exactly the same as in the case of odd $d$ obtained in \cite{DmDo18}, showing that the restriction to the odd case was motivated  by a need of using the linearization techniques.

The extensions of this result to another classes of structured matrix polynomials of even grade, such as symmetric, symmetric/skew-symmetric, or palindromic, is a part of our future work.

\section*{Acknowledgements}


The work of A. Dmytryshyn was supported by the Swedish Research Council (VR) grant 2021-05393.
The work of F. De Ter\'an and F. M. Dopico has been partially funded by the Agencia Estatal de Investigaci\'on of Spain through grants PID2019-106362GB-I00 MCIN/
AEI/ 10.13039/ 501100011033/ and RED2022-134176-T, and by the Madrid Government (Comunidad de Madrid-Spain) under the Multiannual Agreement with UC3M in the line of Excellence of University Professors (EPUC3M23), and in the context of the V PRICIT (Regional Programme of Research and Technological Innovation).

{\footnotesize
\bibliographystyle{abbrv}
\bibliography{library}
}
\end{document}